\newtheorem{theorem}{Theorem}[section]
\newtheorem{proposition}[theorem]{Proposition}
\newtheorem{corollary}[theorem]{Corollary}
\theoremstyle{definition}
\theoremstyle{remark}
\newtheorem{remark}[theorem]{Remark}
\numberwithin{equation}{section}
\newcommand{\mcg}{\mathrm{Mod}}
\newcommand{\mf}{\mathcal{MF}}
\newcounter{count}
\newcounter{counterk} 
\newcounter{counterl} 
\newcounter{counterc} 
\newcounter{countercc} 
\newcounter{countere} 
\newcounter{counterd} 
\newcounter{countern} 
\newcounter{counterr}
\begin{document}

\title[Effective count of square-tiled surfaces]{Effective count of square-tiled surfaces with prescribed real and imaginary foliations in connected components of strata}

\author{Francisco Arana--Herrera}
\address{Institute for Advanced Study, 1 Einstein Drive, Princeton, NJ 08540, USA.}
\email{farana@ias.edu}

\begin{abstract}
	We prove an effective estimate with a power saving error term for the number of square-tiled surfaces in a connected component of a stratum of quadratic differentials whose vertical and horizontal foliations belong to prescribed mapping class group orbits and which have at most $L$ squares. This result strengthens asymptotic counting formulas in work of Delecroix, Goujard, Zograf, Zorich, and the author.
\end{abstract}

\maketitle


\thispagestyle{empty}

\tableofcontents

\section{Introduction}

In her thesis \cite{Mir04}, Mirzakhani proved asymptotic formulas for the number of simple closed geodesics of a given topological type and length at most $L$ on an arbitrary complete, finite area hyperbolic surface. Inspired by this work, Delecroix, Goujard, Zograf, and Zorich \cite{DGZZ21} proved asymptotic formulas for the number of square-tiled surfaces whose vertical and horizontal foliations belong to prescribed mapping class group orbits and which have at most $L$ squares. Later work of the author \cite{Ara20d} established a direct connection between these two results, obtaining a new proof of the results in \cite{DGZZ21} as a direct consequence of \cite{Mir04}.

Despite the success of these different approaches, the techniques used do not yield effective error terms; indeed, they rely crucially on ergodicity. The main goal of this paper is to prove an effective estimate with a power saving error term for the number of square-tiled surfaces in a connected component of a stratum of quadratic differentials whose vertical and horizontal foliations belong to prescribed mapping class group orbits and which have at most $L$ squares. Let us highlight that this new result not only provides an effective error term, but also applies to connected components of strata that are not the principal stratum.

The proof of this result is based on a novel combination of the connections established in \cite{Ara20d} and work of Eskin, Mirzakhani, and Mohammadi \cite{EMM19}. In \cite{Ara20d}, using work of Hubbard and Masur \cite{HM79}, the author established a direct connection between counting problems of square-tiled surfaces and simple closed multi-curves. In \cite{EMM19}, Eskin, Mirzakhani, and Mohammadi proved an effective estimate for the number of simple closed curves of length at most $L$ on an arbitrary compact surface equipped with a Riemannian metric of negative curvature. To prove this result, a sophisticated theory for counting mapping class group orbits of integral simple closed multi-curves in train track coordinates was developed. New variants of this theory, in conjunction with the connections developed in \cite{Ara20d}, are the main tools used in the proof.

The proof of the main result of this paper reinforces the advantage of exploiting the connection between counting problems of square-tiled surfaces and simple closed multi-curves first introduced in \cite{Ara20d}. The main result also continues a program for proving effective counting results for surfaces, their Teichmüller spaces, and their moduli spaces \cite{EMM19,Ara20b,Ara20c,Ara20e}.

\subsection*{Main result.} A square-tiled surface is a connected, oriented surface constructed from finitely many disjoint unit area squares on the complex plane, with sides parallel to the real and imaginary axes, by identifying pairs of sides by translation and/or $180\degree$ rotation. A square-tiled surface represents a particular Riemann surface together with a quadratic differential obtained by lifting $\mathrm{d}z^2$ from the complex plane. The horizontal core multi-curve of a square tiled-surface is the integrally weighted simple closed multi-curve obtained by concatenating the horizontal segments running through the middle of each square. The vertical core multi-curve of a square tiled-surface is defined in an analogous way. See Figure \ref{fig:example} for an example.

\begin{figure}[h]
	\centering
	\begin{subfigure}[b]{0.38\textwidth}
		\centering
		\includegraphics[width=0.6\textwidth]{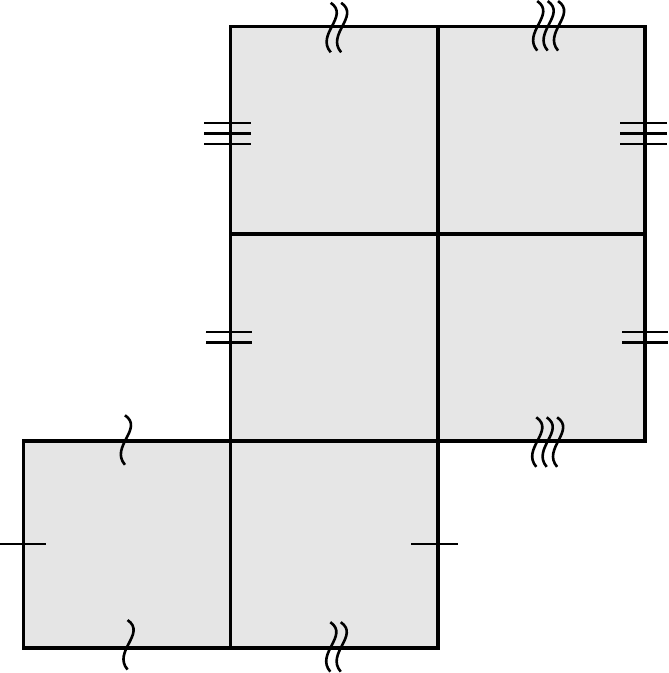}
		\caption{Square-tiled surface.}
		\label{fig:sq_tiled}
	\end{subfigure}
	\quad \quad \quad \\
	~ 
	\begin{subfigure}[b]{0.4\textwidth}
		\centering
		\includegraphics[width=0.6\textwidth]{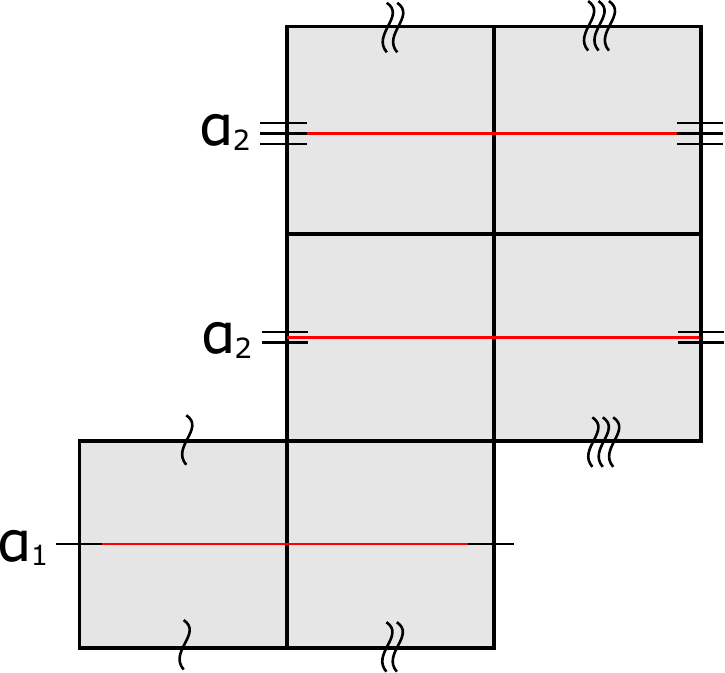}
		\caption{Horizontal core multi-curve.}
		\label{fig:horizontal}
	\end{subfigure}
	\quad \quad \quad
	~ 
	\begin{subfigure}[b]{0.4\textwidth}
		\centering
		\includegraphics[width=0.6\textwidth]{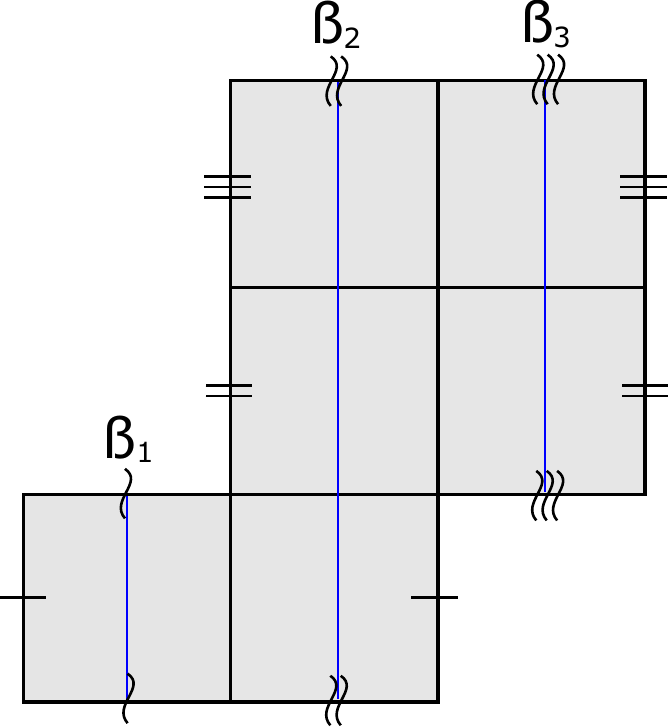}
		\caption{Vertical core multi-curve.}
		\label{fig:vertical}
	\end{subfigure}
	\caption{Example of a square-tiled surface of genus $2$ with two zeroes of order $2$. The horizontal core multi-curve is $\alpha_1 + 2 \alpha_2$. The vertical core multi-curve is $\beta_1 + \beta_2 + \beta_3$.} \label{fig:example}
\end{figure}

Two integral simple closed multi-curves on homeomorphic surfaces are said to have the same topological type if there exists a homeomorphism between the surfaces mapping one multi-curve to the other preserving the weights. 

Let $\mathcal{Q}$ be a connected component of a stratum of quadratic differentials and $\gamma_1$, $\gamma_2$ be a pair of integral simple closed multi-curve on the corresponding topological surface. For every $L > 0$ consider the counting function
\[
sq(\gamma_1,\gamma_2,\mathcal{Q},L) := \#
\left\lbrace 
\begin{array}{c}
\text{square-tiled surfaces in $\mathcal{Q}$ with vertical core }\\
\text{multi-curve of the same topological type as $\gamma_1$,} \\
\text{horizontal core multi-curve of the same}\\
\text{topological type as $\gamma_2$, and $\leq L$ squares}
\end{array}
\right\rbrace /\sim,
\]
where $\sim$ denotes the equivalence relation induced by cut and paste operations.

The following is the main result of this paper.

\begin{theorem}
	\label{theo:main}
	Let $\mathcal{Q}$ be a connected component of a stratum of quadratic differentials of complex dimension $h > 0$ and $\gamma_1$, $\gamma_2$ be a pair of integral simple closed multi-curves on the corresponding topological surface. Then, there exist positive constants $v(\gamma_1,\mathcal{Q}) > 0$, $v(\gamma_2,\mathcal{Q}) > 0$, and $\kappa = \kappa(\mathcal{Q}) > 0$ such that for every $L \geq 0$,
	\[
	sq(\gamma_1,\gamma_2,\mathcal{Q},L) = v(\gamma_1,\mathcal{Q}) \cdot v(\gamma_2,\mathcal{Q}) \cdot L^h+ O_{\gamma_1,\gamma_2,\mathcal{Q}}\left(L^{h-\kappa}\right). \qedhere
	\]
\end{theorem}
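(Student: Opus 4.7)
The plan is to combine the Hubbard--Masur reduction of the square-tiled counting problem to a counting of pairs of integral simple closed multi-curves, as developed in \cite{Ara20d}, with an effective version of the Eskin--Mirzakhani--Mohammadi \cite{EMM19} mapping class group orbit counting in train track charts, applied uniformly over one of the two multi-curves.

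Following \cite{Ara20d}, a square-tiled surface in $\mathcal{Q}$ with vertical core of topological type $\gamma_1$ and horizontal core of topological type $\gamma_2$, taken up to cut and paste, corresponds bijectively to a diagonal $\mathrm{Mod}_g$-orbit of pairs $(\alpha,\beta)$ of integral simple closed multi-curves with $\alpha \in \mathrm{Mod}_g\cdot\gamma_1$, $\beta \in \mathrm{Mod}_g\cdot\gamma_2$, whose joint combinatorial filling pattern matches the component $\mathcal{Q}$; the number of squares is $i(\alpha,\beta)$. Writing $\mathcal{F}_\mathcal{Q}$ for the set of such pairs and unfolding the diagonal orbit by fixing $\beta$, one obtains
\[
sq(\gamma_1,\gamma_2,\mathcal{Q},L) = \sum_{[\beta]} \#\bigl\{\alpha \in \mathrm{Mod}_g\cdot\gamma_1 : (\alpha,\beta)\in\mathcal{F}_\mathcal{Q},\ i(\alpha,\beta) \leq L\bigr\},
\]
where $[\beta]$ runs over representatives of $\mathrm{Stab}(\gamma_1)\backslash \mathrm{Mod}_g\cdot\gamma_2$.

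For each fixed $\beta$, I would apply a variant of the effective orbit counting of \cite{EMM19}. In a train track chart on $\mathcal{ML}_g$ carrying $\gamma_1$, the orbit $\mathrm{Mod}_g\cdot\gamma_1$ restricts to a piece of an integer lattice, the functional $i(\cdot,\beta)$ is linear, and the filling condition cutting out $\mathcal{F}_\mathcal{Q}$ is piecewise linear. The EMM19 strategy---approximating the lattice count by a Masur--Veech volume through smooth bump functions, passing to the Teichmüller flow, and invoking its exponential mixing on $\mathcal{Q}$---should yield
\[
\#\bigl\{\alpha \in \mathrm{Mod}_g\cdot\gamma_1 : (\alpha,\beta)\in\mathcal{F}_\mathcal{Q},\ i(\alpha,\beta) \leq L\bigr\} = c(\beta,\mathcal{Q})\cdot L^{h} + O\!\left(\|\beta\|^{A}\, L^{h-\kappa}\right)
\]
for some positive constants $A$ and $\kappa = \kappa(\mathcal{Q})$ and a chart-dependent size $\|\beta\|$. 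Summing over $[\beta]$ and applying the analogous effective count to the resulting main term $\sum_{[\beta]} c(\beta,\mathcal{Q})$, with the roles of $\alpha$ and $\beta$ swapped, produces the claimed $v(\gamma_1,\mathcal{Q})\cdot v(\gamma_2,\mathcal{Q})\cdot L^h$; the error is handled by a dyadic decomposition of $[\beta]$ in $\|\beta\|$, balancing the a priori bound $\|\beta\| \lesssim L$ coming from $i(\alpha,\beta) \leq L$ against the EMM19 power saving.

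The hardest step will be obtaining the effective count above with implicit constants depending only polynomially on $\|\beta\|$ and with $\kappa$ uniform in $\beta$, so that summation over $[\beta]$ preserves a power-saving error. This calls for a version of the EMM19 machinery tailored to the family of linear target functionals $\{i(\cdot,\beta)\}_\beta$, and for careful bookkeeping of how the combinatorial filling condition cutting out $\mathcal{F}_\mathcal{Q}$ varies with $\beta$ across a finite atlas of train track charts on $\mathcal{ML}_g$.
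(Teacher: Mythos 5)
Your proposal gets the two high-level ingredients right---the Hubbard--Masur correspondence from \cite{Ara20d} relating square-tiled surfaces to pairs of multi-curves, and the effective orbit counting of \cite{EMM19} in train track charts together with exponential mixing---but the way you combine them diverges from the paper and contains a gap at the very first step.

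Your displayed unfolding is incorrect. The quotient of $\{(\alpha,\beta) \in \mcg\cdot\gamma_1 \times \mcg\cdot\gamma_2 : \cdots\}$ by the diagonal $\mcg$-action is \emph{not} a double sum $\sum_{[\beta]\in\mathrm{Stab}(\gamma_1)\backslash\mcg\cdot\gamma_2}\#\{\alpha\in\mcg\cdot\gamma_1:\cdots\}$; that expression overcounts each orbit. The correct unfolding (Proposition \ref{prop:recast} of the paper, following \cite{Ara20d}) sets $\beta=\gamma_2$ once and for all and yields a \emph{single} count $\#\{\alpha \in \mcg\cdot\gamma_1 : \alpha\in\mf(\gamma_2),\ q(\alpha,\gamma_2)\in\mathcal{Q},\ i(\alpha,\gamma_2)\le L\}/\mathrm{Stab}(\gamma_2)$. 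There is no outer sum over $\beta$.

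This matters because it changes the nature of the remaining problem. After the correct unfolding, the hard point is to parametrize the quotient $\mf(\gamma_2)/\mathrm{Stab}(\gamma_2)$ so that the effective count of \cite{EMM19} can be applied in a \emph{finite} collection of charts. That is exactly what Section~2 of the paper supplies: moderately slanted cylinder diagrams and their associated moderately slanted cylinder train tracks (Proposition \ref{prop:train_tracks}) give finitely many train track charts whose characteristic cones cover $\mf(\gamma_2)/\mathrm{Stab}(\gamma_2)$ without overlap, in which $i(\cdot,\gamma_2)$ is linear with bounded sub-level sets (Proposition \ref{prop:area}) and in which membership in the chosen connected component $\mathcal{Q}$ is automatic. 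None of this appears in your proposal, and it is the paper's principal new combinatorial input. Without it, your plan would require applying \cite{EMM19} for \emph{infinitely many} $\beta$ with error constants and exponent $\kappa$ uniform (polynomial) in $\|\beta\|$, and then summing a divergent-looking series; this version of the EMM machinery is not available and your dyadic balancing sketch does not resolve it. Your description of the constraint cutting out $\mathcal{Q}$ as ``piecewise linear in the chart'' also elides a genuine difficulty: in a generic train track chart the connected-component condition is not a chart-level condition at all, which is precisely why the paper uses cylinder train tracks adapted to $\mathcal{Q}$.

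To align with the paper, you would: use Proposition \ref{prop:recast} as stated (single count modulo $\mathrm{Stab}(\gamma_2)$); replace arbitrary train track charts with the finite family of moderately slanted cylinder train tracks from Proposition \ref{prop:train_tracks}; invoke the EMM-type effective count Theorem \ref{theo:EMM} in each chart, with the area/intersection functional linear by Proposition \ref{prop:area}; and then perform the paper's fine-partition comparison (Propositions \ref{prop:lip}--\ref{prop:lead}, Corollary \ref{cor:count}) to pass from the sup-norm cutoff to the linear functional cutoff, optimizing the mesh of the partition against $L$ as in Proposition \ref{prop:count_pre}.
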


\begin{remark}
	The constant $\kappa = \kappa(\mathcal{Q}) > 0$ in Theorem \ref{theo:main} is related to the exponential mixing rate of the Teichmüller geodesic flow on $\mathcal{Q}$. For a precise definition of the constants $v(\gamma_1,\mathcal{Q}) > 0$ and $v(\gamma_2,\mathcal{Q}) > 0$ see \S 3.
\end{remark}

\subsection*{Main ideas of the proof.} To prove Theorem \ref{theo:main} we first recast the counting function $sq(\gamma_1,\gamma_2,\mathcal{Q},L)$ as a counting function of mapping class group orbits of integral simple closed multi-curves in train track coordinates. This is done using the connections first introduced in \cite{Ara20d}; see Proposition \ref{prop:recast}. We then apply a variant of the sophisticated theory for counting mapping class group orbits of integral simple closed multi-curves in train track coordinates developed in \cite{EMM19}; see Theorem \ref{theo:EMM}. This application requires a very careful handling of error terms.

Given a topological surface $S$ and an integral simple closed multi-curve $\gamma$ on $S$, an important step of the proof of Theorem \ref{theo:main} corresponds to parametrizing the quotient space $\mf(\gamma)/\mathrm{Stab}(\gamma)$ in terms of train track coordinates; this is the space of singular measured foliations that together with $\gamma$ fill $S$ modulo the stabilizer of $\gamma$ in the mapping class group. This parametrization is achieved by introducing the concept of moderately slanted cylinder diagrams and moderately slanted cylinder train tracks; see Proposition \ref{prop:train_tracks}.

\begin{remark}
	The ideas introduced in the proof of Theorem \ref{theo:main} can also be used to give effective estimates of other related counting functions of square-tiled surfaces; see Theorem \ref{theo:main_2} for an example.
\end{remark}

\subsection*{Outline of the paper.} In \S2 we introduce the concepts of moderately slanted cylinder diagrams and moderately slanted cylinder train tracks and use them to parametrize the quotient space $\mf(\gamma)/\mathrm{Stab}(\gamma)$. In \S3 we go into more detail on the ideas introduced in \cite{Ara20d} and \cite{EMM19} and use them prove Theorem \ref{theo:main} following the sketch described above.

\subsection*{Acknowledgments.} The author would like to thank Alex Eskin and Amir Mohammadi for enlightening conversarions. The author would also like to thank Alex Wright and Pouya Honaryar for comments on an earlier version of this paper. This work got started while the author was participating in the \textit{Dynamics: Topology and Numbers} trimester program at the Hausdorff Research Institute for Mathematics (HIM). The author is very grateful for the hospitality of the HIM and for the hard work of the organizers of the trimester program. This work was finished while the author was a member of the Institute for Advanced Study (IAS). The author is very grateful to the IAS for its hospitality. This material is based upon work supported by the National Science Foundation under Grant No. DMS-1926686.

\section{Horospheres in connected components of strata}

\subsection*{Outline.} In this section we discuss horospheres on connected components of strata of quadratic differentials from several points of view. After defining the horospherically foliated sets $\mathcal{Q}(\gamma)$ we parametrize them using a special class of cylinder diagrams. We then discuss how to parametrize them in terms of singular measured foliations and, more concretely, how to do so using train track coordinates. This last perspective will be particularly useful in the proof of Theorem \ref{theo:main}.

\subsection*{Horospheres.} For the rest of this section fix an integer vector $\sigma := (\sigma_1,\dots,\sigma_n)$ with $\sigma_i \geq -1$ and a boolean value $\varepsilon \in \{0,1\}$. Denote by $Q(\sigma,\varepsilon)$ the stratum of unmarked quadratic differentials with marked singularties of order $\sigma$ and $\varepsilon = 1$ if and only if every quadratic differential in the stratum is the square of an Abelian differential. A singularity of order $-1$ is a pole, a singularity of order $0$ is a marked point, and a singularity of order $\geq 1$ is a zero of the corresponding order. Let $g \geq 0$ be the non-negative integer satisfying the equation
\[
4g-4 = \sum_{i =1}^n \sigma_i.
\]
The complex dimension of the stratum $Q(\sigma,\varepsilon)$ is given by
\[
h:= 2g-2+n+\epsilon.
\]
For the rest of this section we fix a connected component $\mathcal{Q}$ of this stratum.

Let $S$ be a connected, oriented surface of genus $g$ with $n$ punctures. Denote by $\mathcal{Q}\mathcal{T}$ the lift of the connected component $Q$ of quadratic differentials marked by the surface $S$. Denote by $\mcg:= \mathrm{Mod}(S)$ the mapping class group of $S$. The action of $\mcg$ on $\mathcal{Q}\mathcal{T}$ is properly discontinuous and $Q$ is the corresponding orbifold quotient.

Denote by $\mathcal{MF} := \mathcal{MF}(S)$ the space of singular measured foliations on $S$. The markings on $Q\mathcal{T}$ allow us to define maps
\[
\Re, \Im \colon Q\mathcal{T} \to \mathcal{MF}
\]
that record the vertical and horizontal foliations, respectively, of any marked quadratic differential. Fix an integral simple closed multi-curve $\gamma := a_1\gamma_1+\dots+a_k\gamma_k$ on $S$. Any such multi-curve determines a horospherically foliated set
\[
\mathcal{Q}\mathcal{T}(\gamma) := \{q \in \mathcal{QT} \ | \ \Im(q) =  \gamma\}.
\]
The maps $\Re$ and $\Im$ above factor through the mapping class group action to give
\[
\boldsymbol{\Re}, \boldsymbol{\Im} \colon \mathcal{Q} \to \mathcal{MF}/\mcg. 
\]
Equivalence classes of integral simple closed multi-curves are called topological types. Fixing a topological type gives rise to a horospherically foliated set
\[
\mathcal{Q}(\gamma) := \{q \in \mathcal{Q} \ | \ \boldsymbol{\Im}(q) = \mcg\cdot\gamma\}.
\]

Denote by $\mathrm{Stab}(\gamma) \subseteq \mcg$ the set of mapping classes that fix $\gamma$. It will be convenient for our purposes to consider the intermediate quotient $\mathcal{QT}(\gamma)/ \mathrm{Stab}(\gamma)$. The following proposition will be particularly useful in this context.

\begin{proposition}\cite[Lemma 3.6]{Ara20d}
	\label{prop:int_quot}
	The quotient map $\mathcal{QT}(\gamma)/\mathrm{Stab}(\gamma) \to \mathcal{Q}(\gamma)$ is a homeomorphism.
\end{proposition}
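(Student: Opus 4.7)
The natural candidate for the homeomorphism is the map $\Phi\colon \mathcal{QT}(\gamma)/\mathrm{Stab}(\gamma)\to\mathcal{Q}(\gamma)$ induced by the composition $\mathcal{QT}(\gamma)\hookrightarrow\mathcal{QT}\twoheadrightarrow\mathcal{Q}$. That it is well-defined and continuous is automatic from the universal property of the quotient topology, so my plan is to verify bijectivity and then upgrade $\Phi$ to a homeomorphism using proper discontinuity of the $\mcg$-action on $\mathcal{QT}$.

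For surjectivity, given $q\in\mathcal{Q}(\gamma)$ I would pick any lift $\tilde q\in\mathcal{QT}$; by definition of $\mathcal{Q}(\gamma)$ we have $\Im(\tilde q)=\phi\cdot\gamma$ for some $\phi\in\mcg$, and then $\phi^{-1}\cdot\tilde q\in\mathcal{QT}(\gamma)$ maps to $q$. For injectivity, if two points $q_1,q_2\in\mathcal{QT}(\gamma)$ project to the same class in $\mathcal{Q}$, there exists $\phi\in\mcg$ with $\phi\cdot q_1=q_2$; by equivariance of $\Im$ this forces $\phi\cdot\gamma=\gamma$, so $\phi\in\mathrm{Stab}(\gamma)$ and the classes agree in $\mathcal{QT}(\gamma)/\mathrm{Stab}(\gamma)$.

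The main obstacle is showing $\Phi$ is open, and this is where proper discontinuity enters. The strategy is to decompose the preimage of $\mathcal{Q}(\gamma)$ under $\mathcal{QT}\to\mathcal{Q}$ as the disjoint union $\bigsqcup_{[\phi]\in\mcg/\mathrm{Stab}(\gamma)}\phi\cdot\mathcal{QT}(\gamma)$: disjointness is exactly the injectivity computation above, and the union exhausts the preimage by the surjectivity argument. Because the integral simple closed multi-curves form a discrete subset of $\mathcal{MF}$, the pieces $\phi\cdot\mathcal{QT}(\gamma)=\Im^{-1}(\phi\cdot\gamma)$ are pairwise separated inside the preimage: any point has a neighborhood in $\mathcal{QT}$ on which $\Im$ takes no other value of $\mcg\cdot\gamma$, by continuity of $\Im$. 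Thus the preimage is a topological disjoint union, and $\mcg$ permutes the pieces with setwise stabilizer exactly $\mathrm{Stab}(\gamma)$.

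From here the conclusion is formal. Since $\mcg\to\mathcal{Q}$ is an open quotient map (proper discontinuity), given an open saturated set $V\subseteq\mathcal{QT}(\gamma)$, I would take an open $W\subseteq\mathcal{QT}$ with $W\cap\mathcal{QT}(\gamma)=V$ and shrink $W$ using the separation above so that $W$ meets no other $\phi\cdot\mathcal{QT}(\gamma)$; then the image of $\mcg\cdot W$ in $\mathcal{Q}$ is open, meets $\mathcal{Q}(\gamma)$ precisely in $\Phi(V/\mathrm{Stab}(\gamma))$, and so $\Phi$ is open. Combined with being a continuous bijection, this gives the homeomorphism. The one place to be careful is the separation claim for the translates $\phi\cdot\mathcal{QT}(\gamma)$; everything else is formal once that is in hand.
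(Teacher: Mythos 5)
Your argument is correct and self-contained. Note that the paper does not actually prove this proposition but cites it from \cite[Lemma 3.6]{Ara20d}; your proof is the standard argument one would expect there. The key points you supply — well-definedness and bijectivity via equivariance of $\Im$, and openness via the decomposition of the $\pi$-preimage of $\mathcal{Q}(\gamma)$ into the translates $\phi\cdot\mathcal{QT}(\gamma)=\Im^{-1}(\phi\cdot\gamma)$ together with discreteness of the orbit $\mcg\cdot\gamma$ inside $\mathcal{MF}$ — are exactly what is needed, and the shrinking $W\mapsto W\cap\Im^{-1}(U)$ with $U$ an isolating neighborhood of $\gamma$ does preserve $W\cap\mathcal{QT}(\gamma)=V$, so the openness step closes cleanly. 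One small remark: the openness of $\mathcal{QT}\to\mathcal{Q}$ requires only that $\mcg$ acts by homeomorphisms (so $\mcg\cdot W$ is open), not proper discontinuity; what does the real work is the discreteness of the set of integral multi-curves in $\mathcal{MF}$, which you correctly flag as the place to be careful.
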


\subsection*{Cylinder diagrams.} Quadratic differentials in $\mathcal{Q}(\gamma)$ can be represented via cylinder diagrams. A cylinder diagram is a collection of disjoint parallelograms on the complex plane with certain edge identifications. Each parallelogram has one pair of sides, called bases, which are parallel to the real axis. These bases are broken into edges and pairs of edges are identified by translation and/or 180\degree~rotation. The other pair of sides,  refered to as special edges, are always identified by translation. See Figure \ref{fig:cyl_diag} for an example.

\begin{figure}[h!]
	\centering
	\includegraphics[scale=0.7]{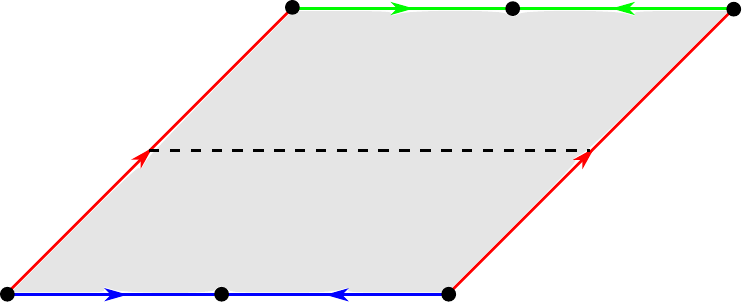}
	\caption{Cylinder diagram of a quadratic differential in $\mathcal{Q}(4,0)$. The horizontal foliation is identified with a simple closed curve on a genus $0$ surface with $4$ punctures that separates the punctures into two sets of two. This is a moderately slanted cylinder diagram.}
	\label{fig:cyl_diag}
\end{figure}

The horizontal foliation of a cylinder diagram is identified with a weighted simple closed multi-curve on the corresponding surface. The components of the multi-curve are given by the core curves of the cylinders and each such curve is weighted by the height of the corresponding cylinder. In the case of cylinder diagrams coming from integral simple closed multi-curves, the heights of the cylinders are always integers.

Each cylinder diagram represents a quadratic differential in a given connected component of a stratum. As we vary the real parts of the edges of a cylinder diagram while retaining the parallelism conditions, the corresponding quadratic differential remains in the same connected component. The identifications in the moduli space $\mathcal{Q}(\gamma)$ correspond to cut and paste operations.

\begin{proposition}
	\cite[\S3]{Ara20d}
	\label{prop:corresp_0}
	The set $\mathcal{Q}(\gamma)$ is in one-to-one correspondence with the set of cylinder diagrams in $\mathcal{Q}$ with horizontal foliation of type $\mcg \cdot \gamma$ up to cut-and-paste operations.
\end{proposition}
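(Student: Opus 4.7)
The plan is to construct the correspondence in both directions and then check that cut-and-paste is precisely the equivalence relation that makes the map well-defined and injective.

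First, given a quadratic differential $q \in \mathcal{Q}(\gamma)$, I would use the fact that the horizontal foliation is (the projection of) a weighted simple closed multi-curve of type $\mcg \cdot \gamma$ to decompose the underlying flat surface into horizontal cylinders. More precisely, since $\boldsymbol{\Im}(q)$ is represented by a closed multi-curve with integer weights, the non-critical horizontal leaves of $q$ are all closed, and the complement of the critical horizontal graph of $q$ is a disjoint union of flat annuli whose core curves realize the components of $\gamma$ with weights equal to their heights. To pass from this cylinder decomposition to a cylinder diagram in the sense of the previous paragraph, I cut each cylinder along a single vertical saddle connection (the special edge of the corresponding parallelogram); this unrolls each cylinder into a parallelogram in the plane, with bases glued by the horizontal identifications coming from the critical graph and special edges glued by translation. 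That this produces an object of the declared combinatorial type, and that it represents a quadratic differential in the same connected component $\mathcal{Q}$, is immediate from the construction.

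Next, I would construct the inverse direction: given any cylinder diagram whose horizontal foliation is of type $\mcg \cdot \gamma$, the side identifications described in \S2 produce a well-defined translation surface with $\mathbf{Z}/2$-holonomy, hence a quadratic differential; the singularity data depend only on the combinatorics of the identifications, so the result lies in $\mathcal{Q}$, and the horizontal foliation is by construction a closed multi-curve of type $\mcg \cdot \gamma$. Therefore there are well-defined maps in both directions between $\mathcal{Q}(\gamma)$ and the set of cylinder diagrams.

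The main step, and the one I expect to be the principal obstacle, is to identify the fibers of the map from cylinder diagrams to $\mathcal{Q}(\gamma)$ with the cut-and-paste equivalence relation. The ambiguity in the forward map comes from the choice of vertical saddle connection used to cut each cylinder: moving this cut across a singularity on the boundary replaces the cylinder diagram by one related by a cut-and-paste move, and conversely any two cylinder diagrams representing the same quadratic differential differ by a finite sequence of such moves. I would verify this by an inductive argument on the singularities of the horizontal critical graph, using that the set of legal vertical cuts within a given cylinder is parametrized by the singularities on its boundary and that adjacent cuts differ by a single elementary cut-and-paste. Combining this with Proposition \ref{prop:int_quot} to relate marked and unmarked representatives, and with the fact that any quadratic differential in $\mathcal{Q}(\gamma)$ realizes the horizontal multi-curve as a union of closed leaves, yields the bijection claimed in Proposition \ref{prop:corresp_0}.
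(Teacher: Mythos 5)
The paper itself does not prove this proposition; it cites \cite[\S3]{Ara20d} for it, and the surrounding text simply narrates the correspondence. Your proposal reconstructs the natural argument — decompose $q$ into horizontal cylinders using the closed-leaf structure forced by $\boldsymbol{\Im}(q) = \mcg\cdot\gamma$, cut each cylinder open along a saddle connection traversing it, and glue back in the reverse direction — and this is indeed the approach underlying the cited reference and the paper's own discussion.

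Two points should be fixed or strengthened. First, you cut each cylinder along a \emph{vertical} saddle connection; this is not available in general. A generic quadratic differential in $\mathcal{Q}(\gamma)$ has no vertical saddle connection crossing a given cylinder. What always exists, and what the cylinder-diagram picture actually uses, is \emph{some} saddle connection crossing the cylinder exactly once (a straight segment from a singularity on the bottom boundary to one on the top). The resulting special edges are slanted in general, which is precisely why the paper must later normalize via the ``moderately slanted'' condition $0 < s_i \le b_i$; if the cut were vertical one would have $s_i = 0$ and there would be nothing to normalize. Second, the crux of the statement — that the fibers of the map from cylinder diagrams to $\mathcal{Q}(\gamma)$ are exactly the cut-and-paste classes — is asserted rather than proved. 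Your observation that moving a cut past a boundary singularity is an elementary cut-and-paste move is the right local picture, but ``an inductive argument on the singularities of the horizontal critical graph'' does not identify what is being inducted on. The cleaner route is to note that, for a fixed cylinder $C_i$, the set of saddle connections crossing $C_i$ once is an orbit of the group $\mathbf{Z}$ (Dehn twist about the core) acting on a finite set of ``shortest'' crossing saddle connections indexed by pairs of boundary singularities; any two are joined by a finite chain of adjacent-singularity slides and full twists, each of which is a single cut-and-paste, and these moves for different $i$ commute. Finally, Proposition \ref{prop:int_quot} plays no real role here: the proposition is already stated at the level of the unmarked moduli space $\mathcal{Q}(\gamma)$, and a cylinder diagram names an unmarked surface, so there is no marked/unmarked passage to mediate.
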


To get a more complete description of the correspondence in Proposition \ref{prop:corresp_0} we restrict our attention to a particular class of cylinder diagrams. Consider a cylinder diagram with pairs of bases of lengths $b_1,\dots,b_k > 0$ and special edges of holonomy with real parts $s_1,\dots,s_k \in \mathbf{R}$. We say this cylinder diagram is moderately slanted if $0 < 
s_i \leq b_i$ for every $i \in \{1,\dots,k\}$. See Figure \ref{fig:cyl_diag} for an example. The following proposition follows immediately by applying Dehn twists in an appropriate way.


\begin{proposition}
	Up to the action of $\mathrm{Stab}(\gamma)$, every cylinder diagram describing a quadratic differential in $\mathcal{Q}(\gamma)$ can be represented in a unique way as a moderately slanted cylinder diagram.
\end{proposition}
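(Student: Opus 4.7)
The plan is to exploit the fact that the Dehn twists around the core curves $\gamma_i$ lie in $\mathrm{Stab}(\gamma)$ and act on cylinder diagrams by discrete horizontal shears.

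First I would observe that for each $i \in \{1, \ldots, k\}$, the Dehn twist $T_{\gamma_i}$ preserves each component of $\gamma$ together with its weight $a_i$, hence belongs to $\mathrm{Stab}(\gamma)$. When realized on a cylinder diagram representing a point of $\mathcal{Q}\mathcal{T}(\gamma)$, the mapping class $T_{\gamma_i}$ is supported in an annular neighborhood of $\gamma_i$, i.e.\ in the interior of the $i$-th cylinder, and acts there as a single full shear. In terms of the parameters $(b_j, s_j)_{j=1}^{k}$ attached to the cylinder diagram, this shear leaves $b_j$ unchanged for all $j$, leaves $s_j$ unchanged for $j \neq i$, and sends $s_i \mapsto s_i + b_i$ (with the sign dictated by the orientation convention).

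For existence, since the half-open intervals $((m-1)b_i, m b_i]$ partition $\mathbf{R}$, for every $i$ there is a unique integer $n_i$ such that $s_i + n_i b_i \in (0, b_i]$. Applying the composition $\prod_{i=1}^k T_{\gamma_i}^{n_i} \in \mathrm{Stab}(\gamma)$ then produces a moderately slanted representative within the $\mathrm{Stab}(\gamma)$-orbit of the original cylinder diagram.

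For uniqueness, suppose two moderately slanted cylinder diagrams $D$ and $D'$ represent the same element of $\mathcal{Q}(\gamma)$. By Proposition \ref{prop:int_quot}, their lifts to $\mathcal{Q}\mathcal{T}(\gamma)$ are related by some $\phi \in \mathrm{Stab}(\gamma)$. The cylinder decomposition of a quadratic differential in $\mathcal{Q}\mathcal{T}(\gamma)$ is intrinsic: the cylinders and their combinatorial adjacency pattern are determined by the horizontal foliation $\gamma$ together with its weights, so $\phi$ must preserve this combinatorial structure, possibly up to a relabeling of components of $\gamma$ sharing the same weight. Once the labeling is fixed, $\phi$ is forced to act on the remaining parameters through a composition of powers of the Dehn twists $T_{\gamma_i}$, since the only mapping classes in $\mathrm{Stab}(\gamma)$ that fix the combinatorial type of the cylinder decomposition and act by the identity on the complement of the cylinders are such compositions. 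This means $\phi$ shifts each $s_i$ by an integer multiple of $b_i$, and the moderate slanting condition $s_i \in (0, b_i]$ pins down that multiple to be $0$, giving $D = D'$.

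The main obstacle I expect is the uniqueness step, specifically the argument that any $\mathrm{Stab}(\gamma)$-element respecting the (intrinsic) combinatorial structure of the cylinder diagram must already be a product of Dehn twists around the $\gamma_i$; this uses the fact that the cylinders are fixed as subsurfaces and that the only mapping classes supported on an annulus and fixing its boundary are powers of the core Dehn twist.
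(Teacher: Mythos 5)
Your strategy---normalize the shear parameters via Dehn twists about the core curves $\gamma_i$---is exactly what the paper has in mind; the paper itself offers no argument beyond the remark that the proposition ``follows immediately by applying Dehn twists in an appropriate way,'' so you are supplying the details that the paper leaves implicit. The existence half is clean: each $T_{\gamma_i}$ lies in $\mathrm{Stab}(\gamma)$, acts on the shear by $s_i \mapsto s_i + b_i$, and the half-open intervals $((m-1)b_i, m b_i]$ tile $\mathbf{R}$, so the normalization is forced. Your uniqueness argument, however, leans on an unjustified premise, namely that the element $\phi \in \mathrm{Stab}(\gamma)$ matching the two moderately slanted diagrams ``acts by the identity on the complement of the cylinders.'' Nothing forces this: $\phi$ could act nontrivially on the horizontal saddle connection graph, for instance if the flat surface admits a symmetry (a hyperelliptic involution is a standard example), and such $\phi$ is certainly not a product of the $T_{\gamma_i}$. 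The cleaner route---which also makes the ``up to relabeling'' caveat you flag transparent---is to observe that the moderately slanted cylinder diagram is determined intrinsically by the underlying unmarked quadratic differential: the horizontal cylinder decomposition, the pattern of saddle connections on the bases, and the base lengths and heights are all intrinsic to the flat metric, while the shear of each cylinder is intrinsic only modulo its base length; the moderate slant condition then removes exactly that ambiguity, so no further analysis of the specific $\phi$ is needed. In short, the approach and conclusion are right, you correctly anticipate where the subtlety lies, and the repair is a small shift from analyzing $\phi$ to analyzing what data of the cylinder diagram is intrinsic.
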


In particular, we deduce the following corollary.

\begin{corollary}
	\label{cor:cylinder_diag}
	There exists finitely many moderately slanted cylinder diagrams which, by varying the real parts of their edges while retaining the parallelism and moderately slanted conditions, represent all quadratic differentials in $\mathcal{Q}(\gamma)$ without overlaps.
\end{corollary}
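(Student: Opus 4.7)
The plan is to combine the preceding proposition with a finiteness argument at the purely combinatorial level. A cylinder diagram carries two kinds of data: \emph{combinatorial} data (the number of parallelograms, the subdivision of each base into edges, the pairing of edges, and for each pair the choice between translation and $180\degree$ rotation), and \emph{metric} data (the real and imaginary parts of the edge holonomies, subject to the parallelism constraints on identified edges).

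First, I would argue that for fixed $\mathcal{Q}$ and $\gamma$ the combinatorial data takes only finitely many values. The decomposition $\gamma = a_1\gamma_1 + \cdots + a_k\gamma_k$ prescribes the number $k$ of parallelograms and their integer heights $a_1,\dots,a_k$, while the stratum $\mathcal{Q}$ fixes the genus and the number and orders of singularities. Together these bound the total number of edges appearing along the bases, and there are only finitely many ways to pair a bounded number of edges (with the two available identification modes) and distribute them among the $k$ bases of prescribed heights subject to the stratum constraints. Hence the collection of combinatorial types is finite.

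Next, for each combinatorial type, varying the real parts of the edges subject to the parallelism equations and the moderately slanted inequalities $0 < s_i \leq b_i$ produces a family of moderately slanted cylinder diagrams of that type. By the remarks preceding Proposition \ref{prop:corresp_0}, each such diagram represents a quadratic differential in the same connected component $\mathcal{Q}$, and because its horizontal foliation has type $\mcg \cdot \gamma$ by construction, the resulting quadratic differential in fact lies in $\mathcal{Q}(\gamma)$.

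Finally, to see that these finitely many families exhaust $\mathcal{Q}(\gamma)$ without overlaps, I would invoke the preceding proposition together with Proposition \ref{prop:int_quot}: every quadratic differential in $\mathcal{Q}(\gamma) \cong \mathcal{QT}(\gamma)/\mathrm{Stab}(\gamma)$ is represented by a moderately slanted cylinder diagram, and uniquely so, hence lies in exactly one of the finitely many families. The only substantive point is the combinatorial finiteness, which is a routine bookkeeping exercise once the stratum and the topological type of $\gamma$ are fixed; everything else follows formally from the previous propositions.
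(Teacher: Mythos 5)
Your proposal is correct and follows essentially the same route as the paper: the paper states no explicit proof for this corollary, instead deducing it directly from the unnumbered proposition on uniqueness of moderately slanted representatives together with the (implicit, standard) finiteness of combinatorial cylinder-diagram types in a fixed stratum with a prescribed number and heights of cylinders, which you make explicit via the separatrix/edge bookkeeping. The only thing you leave slightly informal is the justification that the stratum data bounds the number of horizontal edges, which is the usual count of horizontal separatrices emanating from each singularity of order $\sigma_i$ (namely $\sigma_i + 2$ of them), but this is routine and matches the paper's implicit reliance on that fact.
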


\subsection*{Measured foliations.} Quadratic differentials in $\mathcal{Q}(\gamma)$ can also be parametrized in terms of their vertical foliations. More precisely, denote by $i(\cdot,\cdot)$ the geometric intersection number pairing on $\mf \times \mf$. Consider the set $\Delta \subseteq \mf \times \mf$ of pairs of non-filling singular measured foliations, i.e.,
\[
\Delta := \{(\lambda,\mu) \in \mf \times \mf \ | \ \exists \eta \in \mf \colon i(\lambda,\eta) + i(\mu,\eta) = 0 \}.
\]
By work of Hubbard and Masur \cite{HM79}, the map
\[
(\Re,\Im) \colon \mathcal{Q}\mathcal{T} \to \mf \times \mf \setminus \Delta
\]
is a $\mcg$-equivariant homeomorphism onto its image. Denote 
\[
\mf(\gamma) := \{\lambda \in \mf \ | \ (\gamma,\lambda) \notin \Delta\}.
\]

\begin{proposition}\cite{HM79}
	\label{prop:corresp1}
	The map $\Re \colon \mathcal{QT}(\gamma) \to \mf(\gamma)$ is a $\mathrm{Stab}(\gamma)$-equivariant homeomorphism onto its image.
\end{proposition}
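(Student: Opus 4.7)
The plan is to deduce this proposition as a direct corollary of the Hubbard--Masur theorem stated immediately above. The key observation is that $\mathcal{QT}(\gamma)$ is by definition the fiber $\Im^{-1}(\gamma)$, so on this fiber the map $(\Re, \Im)$ reduces to $q \mapsto (\Re(q), \gamma)$, and the first-coordinate projection simply collapses a constant second factor, which is tautologically a homeomorphism onto its image.

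Concretely, I would first restrict the Hubbard--Masur homeomorphism to $\mathcal{QT}(\gamma)$ and compose with projection to the first factor to obtain a homeomorphism onto its image in $\mf$. Next I would identify this image with $\mf(\gamma)$: since $(\Re, \Im)$ takes values in the complement of $\Delta$, the image is automatically contained in $\{\lambda \in \mf : (\lambda, \gamma) \notin \Delta\} = \mf(\gamma)$; conversely, given $\lambda \in \mf(\gamma)$, the Hubbard--Masur surjectivity onto $(\mf \times \mf) \setminus \Delta$ produces a (unique) $q \in \mathcal{QT}$ with $\Re(q) = \lambda$ and $\Im(q) = \gamma$, and this $q$ lies in $\mathcal{QT}(\gamma)$ by construction.

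Finally, $\mathrm{Stab}(\gamma)$-equivariance is inherited from the $\mcg$-equivariance of $(\Re, \Im)$. For $\phi \in \mathrm{Stab}(\gamma)$ and $q \in \mathcal{QT}(\gamma)$, the identity $\Im(\phi \cdot q) = \phi \cdot \Im(q) = \phi \cdot \gamma = \gamma$ shows that $\mathrm{Stab}(\gamma)$ preserves $\mathcal{QT}(\gamma)$, while $\Re(\phi \cdot q) = \phi \cdot \Re(q)$ is exactly the required intertwining. One also needs to note that $\mathrm{Stab}(\gamma)$ preserves $\mf(\gamma)$, which is immediate from the $\mcg$-invariance of the non-filling locus $\Delta$ together with the invariance of $\gamma$.

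There is no serious obstacle here: the substantive analytic content, namely the bijective parametrization of quadratic differentials by their pairs of transverse measured foliations and the continuity of the inverse, lives entirely in \cite{HM79}; the present statement is essentially a bookkeeping restriction of that result to a single horospherical slice, followed by an observation about equivariance with respect to the stabilizer subgroup.
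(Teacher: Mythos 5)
The core of your argument---restrict the Hubbard--Masur map $(\Re,\Im)$ to the slice $\mathcal{QT}(\gamma)=\Im^{-1}(\gamma)$, observe that projection to the first factor loses no information because the second coordinate is constant, and inherit $\mathrm{Stab}(\gamma)$-equivariance from $\mcg$-equivariance---is exactly right, and it is all that is needed; the paper gives no further proof beyond the citation to \cite{HM79}.

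However, your second paragraph overclaims, and the overclaim is actually false. You assert that the image of $\Re$ equals all of $\mf(\gamma)$, invoking ``Hubbard--Masur surjectivity onto $(\mf\times\mf)\setminus\Delta$.'' But in this paper $\mathcal{QT}$ is not the bundle of all marked quadratic differentials: it is the lift of a \emph{fixed connected component $\mathcal{Q}$ of a fixed stratum}. This is precisely why the paper states that $(\Re,\Im)\colon\mathcal{QT}\to\mf\times\mf\setminus\Delta$ is a homeomorphism \emph{onto its image}, not a surjection. Given $\lambda\in\mf(\gamma)$, the full Hubbard--Masur theorem does produce a unique marked quadratic differential with vertical foliation $\lambda$ and horizontal foliation $\gamma$, but nothing forces that differential to lie in the chosen connected component of the chosen stratum; in general it will not. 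So the image of $\Re\colon\mathcal{QT}(\gamma)\to\mf(\gamma)$ is a proper subset of $\mf(\gamma)$. The proposition is stated as ``homeomorphism onto its image'' precisely to accommodate this, and you should drop the ``conversely'' step: the containment of the image in $\mf(\gamma)$ (which you argue correctly, using the symmetry of $\Delta$) is the only inclusion needed or available.
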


Of particular importance for us will be the quotient $\mf(\gamma)/\mathrm{Stab}(\gamma)$. Directly from the Proposition \ref{prop:corresp1} we deduce the following corollary.

\begin{corollary}
	\label{cor:corresp2}
	The induced map $\Re \colon \mathcal{QT}(\gamma)/\mathrm{Stab}(\gamma) \to \mf(\gamma)/\mathrm{Stab}(\gamma)$ is a homeomorphism onto its image.
\end{corollary}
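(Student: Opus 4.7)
The plan is to obtain Corollary \ref{cor:corresp2} as a formal consequence of the equivariant homeomorphism in Proposition \ref{prop:corresp1}, using standard quotient-topology arguments. The statement essentially asserts that an equivariant homeomorphism onto its image descends to the quotients, so the main content is a careful verification that each property (well-definedness, continuity, injectivity, openness) transfers.

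First I would verify that the $\mathrm{Stab}(\gamma)$-action on $\mf$ restricts to an action on $\mf(\gamma)$; this follows from $\mcg$-invariance of the intersection pairing $i(\cdot,\cdot)$, since for any $g \in \mathrm{Stab}(\gamma)$, $\lambda \in \mf(\gamma)$, and test foliation $\eta$, we have $i(\gamma, g \cdot \lambda) + i(\mu, g \cdot \lambda) = i(g^{-1} \cdot \gamma, \lambda) + i(g^{-1} \cdot \mu, \lambda) = i(\gamma, \lambda) + i(g^{-1}\cdot \mu, \lambda)$, so $g \cdot \lambda \in \mf(\gamma)$. With this in hand, the $\mathrm{Stab}(\gamma)$-equivariance of $\Re$ from Proposition \ref{prop:corresp1} guarantees that $\Re$ passes to a well-defined map $\overline{\Re} \colon \mathcal{QT}(\gamma)/\mathrm{Stab}(\gamma) \to \mf(\gamma)/\mathrm{Stab}(\gamma)$, continuous by the universal property of the quotient topology.

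Next I would check injectivity of $\overline{\Re}$: if $\overline{\Re}([q_1]) = \overline{\Re}([q_2])$, then $\Re(q_1) = g \cdot \Re(q_2)$ for some $g \in \mathrm{Stab}(\gamma)$, which by equivariance equals $\Re(g \cdot q_2)$; injectivity of $\Re$ upstairs then forces $q_1 = g \cdot q_2$, so $[q_1] = [q_2]$. For the inverse direction, I would use that both quotient maps $\pi_1 \colon \mathcal{QT}(\gamma) \to \mathcal{QT}(\gamma)/\mathrm{Stab}(\gamma)$ and $\pi_2 \colon \mf(\gamma) \to \mf(\gamma)/\mathrm{Stab}(\gamma)$ are open (the actions are by homeomorphisms, so saturations of open sets are unions of open sets), together with the fact that $\Re \colon \mathcal{QT}(\gamma) \to \Re(\mathcal{QT}(\gamma))$ is open. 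Given an open set $U \subseteq \mathcal{QT}(\gamma)/\mathrm{Stab}(\gamma)$, its preimage $\pi_1^{-1}(U)$ is open, its image $\Re(\pi_1^{-1}(U))$ is open in $\Re(\mathcal{QT}(\gamma))$, and applying $\pi_2$ then yields the open set $\overline{\Re}(U)$ in the image. Thus $\overline{\Re}$ is an open continuous bijection onto its image, i.e.\ a homeomorphism.

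I do not anticipate any real obstacle here: the whole argument is a formal diagram chase on the commutative square
\[
\begin{tikzcd}
\mathcal{QT}(\gamma) \arrow[r, "\Re"] \arrow[d, "\pi_1"'] & \mf(\gamma) \arrow[d, "\pi_2"] \\
\mathcal{QT}(\gamma)/\mathrm{Stab}(\gamma) \arrow[r, "\overline{\Re}"'] & \mf(\gamma)/\mathrm{Stab}(\gamma)
\end{tikzcd}
\]
once equivariance of the action is in place. The only mild subtlety worth flagging is that one must verify the action on $\mf(\gamma)$ is well defined before speaking of its quotient, but this is immediate from $\mcg$-invariance of the intersection pairing as noted above.
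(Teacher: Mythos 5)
Your argument is correct and matches the paper's approach, which is simply to observe that the corollary follows formally from the $\mathrm{Stab}(\gamma)$-equivariant homeomorphism of Proposition \ref{prop:corresp1} by descending to quotients; you have just spelled out the standard diagram chase the paper leaves implicit. One small slip: in verifying that $\mathrm{Stab}(\gamma)$ preserves $\mf(\gamma)$, you introduce a test foliation $\eta$ but then write $\mu$ in the computation and plug the wrong arguments into $i(\cdot,\cdot)$; the intended (and correct) calculation is $i(\gamma,\eta)+i(g\cdot\lambda,\eta)=i(\gamma,g^{-1}\eta)+i(\lambda,g^{-1}\eta)\neq 0$ for all $\eta$, using $g\cdot\gamma=\gamma$ and $\mcg$-invariance of $i$.
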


\subsection*{Train tracks.} The correspondence in Proposition \ref{prop:corresp1} and Corollary \ref{cor:corresp2} will allow us to study the quotient $\mf(\gamma)/\mathrm{Stab}(\gamma)$ using moderately slanted cylinder diagrams. We can make this idea more precise using train tracks. Given a moderately slanted cylinder diagram, consider a triangulation by saddle connections of the underlying surface $S$ as in Figure \ref{fig:triang}. 

\begin{figure}[h!]
	\centering
	\includegraphics[scale=0.7]{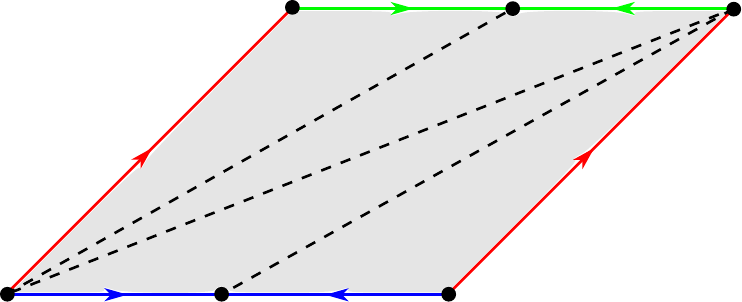}
	\caption{Triangulation associated to the moderately slanted cylinder diagram in Figure \ref{fig:cyl_diag}.}
	\label{fig:triang}
\end{figure}

On each of the triangles of this triangulation consider a 1-complex as in Figure \ref{fig:complex_0}; the edges of this complex that do not intersect the sides of the triangle will be referred to as inner edges. Label the edges of the triangle by $a, b, c$ so that 
\[
|\Re(a)| = |\Re(b)| + |\Re(c)|.
\]
The edge labeled $a$ is unique because the cylinder diagram is moderately slanted. Delete the inner edge of the complex of the triangle opposite to $a$ as in Figure \ref{fig:tt}. Joining these complexes along the edges of the triangulation as in Figure \ref{fig:tt_complete} yields a train train $\tau$ on $S$ that carries the singular measured foliation $\Re(q)$; the weights of the train track correspond to the absolute value of the real parts of the edges of  the triangulation. Furthermore, the area of $q$ is equal to $i(\Re(q),\gamma)$.
	
\begin{figure}[h]
	\centering
	\begin{subfigure}[b]{0.4\textwidth}
		\centering
		\includegraphics[width=0.7\textwidth]{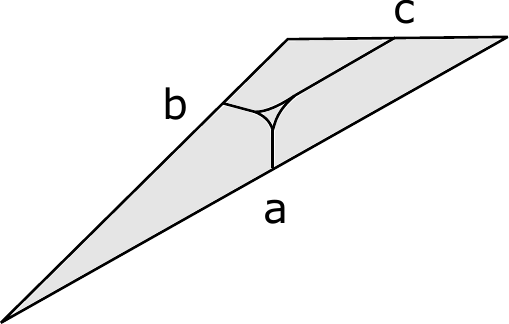}
		\caption{The initial 1-complex.}
		\label{fig:complex_0}
	\end{subfigure}
	\quad \quad \quad
	~ 
	\begin{subfigure}[b]{0.4\textwidth}
		\centering
		\includegraphics[width=0.7\textwidth]{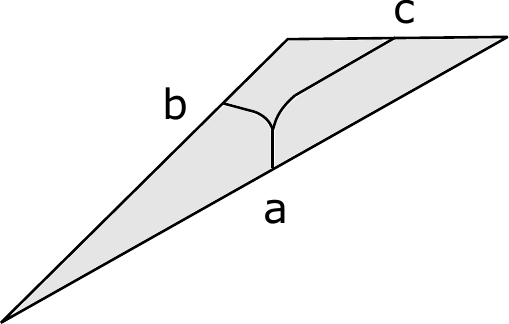}
		\caption{Deleting an inner edge.}
		\label{fig:tt}
	\end{subfigure}
	\caption{The $1$-complexes in a triangle.} 
	\label{fig:complex}
\end{figure}

\begin{figure}[h!]
	\centering
	\includegraphics[scale=0.7]{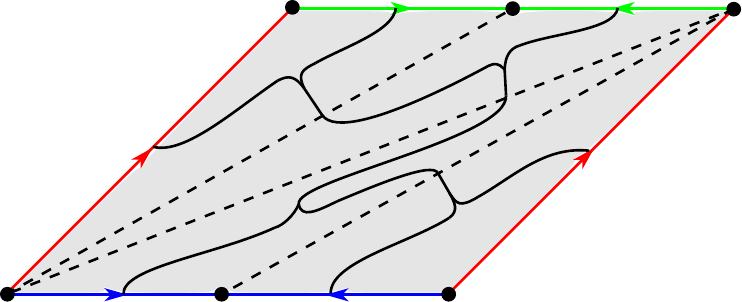}
	\caption{Train track associated to the moderately slanted cylinder diagram in Figure \ref{fig:cyl_diag}.}
	\label{fig:tt_complete}
\end{figure}

We refer to the train tracks constructed above as moderately slanted cylinder train tracks. Varying the real parts of the edges of the cylinder diagram while retaining the parallelism and moderately slanted conditions preserves the train track. Moreover, the moderately slanted condition corresponds to an explicit linear cone in the weight space of the train track. We refer to this cone as the \textit{characteristic cone} of the train track. Directly from this discussion, Proposition \ref{prop:int_quot}, Corollary \ref{cor:cylinder_diag}, and Corollary \ref{cor:corresp2}, we deduce the following result.

\begin{proposition}
	\label{prop:train_tracks}
	There exist finitely many moderately slanted cylinder train tracks which, in their characteristic cone, carry all singular measured foliations in the image of $\Re \colon \mathcal{Q}\mathcal{T}(\gamma) \to \mf(\gamma)$ up to the action of $\mathrm{Stab}(\gamma)$ without overlaps.
\end{proposition}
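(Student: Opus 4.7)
The strategy is to assemble the pieces already in place: Corollary \ref{cor:cylinder_diag} provides the combinatorial parametrization, Proposition \ref{prop:int_quot} lifts that parametrization from $\mathcal{Q}(\gamma)$ to $\mathcal{QT}(\gamma)/\mathrm{Stab}(\gamma)$, the explicit construction of Figures \ref{fig:triang}--\ref{fig:tt_complete} attaches a train track to each cylinder diagram, and Corollary \ref{cor:corresp2} transports everything over to $\mf(\gamma)/\mathrm{Stab}(\gamma)$ via $\Re$.

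First I would fix, by Corollary \ref{cor:cylinder_diag}, a finite list $D_1,\dots,D_N$ of moderately slanted cylinder diagrams whose deformations — varying the real parts of the edges subject to the parallelism relations and the moderately slanted inequalities $0<s_i\leq b_i$ — realize every point of $\mathcal{Q}(\gamma)$ exactly once. By Proposition \ref{prop:int_quot} this is equivalently a parametrization of $\mathcal{QT}(\gamma)/\mathrm{Stab}(\gamma)$. To each $D_i$ I would then associate the moderately slanted cylinder train track $\tau_i$ produced by the procedure described just before the statement: triangulate the surface by saddle connections as in Figure \ref{fig:triang}, place the $1$-complex of Figure \ref{fig:complex_0} in each triangle, and delete the inner edge opposite to the uniquely determined side $a$ with $|\Re(a)|=|\Re(b)|+|\Re(c)|$ (uniqueness of $a$ being exactly where the moderately slanted hypothesis is used).

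Next I would observe that $\tau_i$ depends only on the combinatorial type of $D_i$, not on the specific real parts used in the deformation, while the absolute values of those real parts furnish a consistent system of weights on the branches of $\tau_i$. The weight inequalities cutting out the moderately slanted locus translate directly into the linear inequalities defining the characteristic cone of $\tau_i$, so for every $q$ represented by a deformation of $D_i$ the foliation $\Re(q)$ is carried by $\tau_i$ with weights lying in that cone. Pushing this through the homeomorphism of Corollary \ref{cor:corresp2} yields that $\tau_1,\dots,\tau_N$, taken with their characteristic cones, cover the image of $\mathcal{QT}(\gamma)/\mathrm{Stab}(\gamma)$ in $\mf(\gamma)/\mathrm{Stab}(\gamma)$.

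The step I expect to require the most care is the ``without overlaps'' clause. Overlap would mean that two distinct weight vectors — possibly on the same $\tau_i$ or on two different $\tau_i, \tau_j$ — give rise to the same $\mathrm{Stab}(\gamma)$-orbit in $\mf(\gamma)$. Using the injectivity in Corollary \ref{cor:corresp2} this pulls back to an identification of two points in $\mathcal{QT}(\gamma)/\mathrm{Stab}(\gamma)$, and then by Proposition \ref{prop:int_quot} to an identification of two points in $\mathcal{Q}(\gamma)$, contradicting the no-overlap clause of Corollary \ref{cor:cylinder_diag} since the map from (real parts satisfying the moderately slanted conditions) to (quadratic differentials in $\mathcal{Q}(\gamma)$) is already injective there. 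Hence the train track parametrization is exact, completing the proof.
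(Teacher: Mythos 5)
Your proof is correct and follows essentially the same route the paper takes: the paper deduces Proposition~\ref{prop:train_tracks} ``directly from this discussion, Proposition~\ref{prop:int_quot}, Corollary~\ref{cor:cylinder_diag}, and Corollary~\ref{cor:corresp2},'' which is exactly the chain you assemble. Your write-up simply makes explicit the bookkeeping (in particular the no-overlap step) that the paper leaves implicit.
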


Recall $\gamma := a_1\gamma_1 + \dots + a_k \gamma_k$. Consider a moderately slanted cylinder diagram with $k$ cylinders $C_1,\dots,C_k$ representing a quadratic differential $q \in \mathcal{Q}(\gamma)$. Denote by $\tau$ the corresponding moderately slanted cylinder train track. Let $u_1,\dots,u_k > 0$ be the weights on $\tau$ corresponding to the special edges of each cylinder. For each cylinder $C_i$ let $v_1^{(i)},\dots,v_{m(i)}^{(i)} > 0$ be the weights on $\tau$ corresponding to the edges on the top base of the cylinder ordered from left to right. For each cylinder $C_i$ let $w_1^{(i)},\dots,w_{l(i)}^{(i)} > 0$ be the weights on $\tau$ corresponding to the edges on the bottom base of the cylinder ordered from left to right. The weights considered completely determine the rest of the weights of the $\tau$. Notice that
\[
\mathrm{Area}(q) := \frac{1}{2}\left(\sum_{i =1}^k a_i \left(\sum_{j=1}^{m(i)} v_j^{(i)} + \sum_{j=1}^{l(i)} w_j^{(i)}\right)\right).
\]

Directly from the discussion above we deduce the following.

\begin{proposition}
	\label{prop:area}
	In the weight space of a moderately slanted cylinder train track, the area of a quadratic differential, or, equivalently, the geometric intersection number with the horizontal foliation, is given by a linear functional whose sub-level-sets are bounded when intersected with the characteristic cone.
\end{proposition}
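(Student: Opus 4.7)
The plan is to read off the claim directly from the explicit area formula displayed immediately before the statement, combined with the defining inequalities of the characteristic cone. First, I would observe that the formula
\[
\mathrm{Area}(q) \;=\; \frac{1}{2}\sum_{i=1}^k a_i\Bigl(\sum_{j=1}^{m(i)} v_j^{(i)} + \sum_{j=1}^{l(i)} w_j^{(i)}\Bigr)
\]
is manifestly a positive linear combination of the selected weights $v_j^{(i)}, w_j^{(i)}$. As already noted in the paragraph preceding the statement, the weights $\{u_i, v_j^{(i)}, w_j^{(i)}\}$ determine all remaining weights on $\tau$ through the (linear) switch relations, and therefore give linear coordinates on the weight space of $\tau$. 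Consequently the area extends uniquely to a linear functional on the full weight space; the identification with $i(\Re(q),\gamma)$ is the one already recorded in the construction of $\tau$ above.

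For the boundedness claim, I would first simplify the expression on the characteristic cone. Writing $b_i := \sum_{j=1}^{m(i)} v_j^{(i)}$ for the length of the top base of $C_i$, the same quantity equals $\sum_{j=1}^{l(i)} w_j^{(i)}$, since both sides measure the circumference of $C_i$; this equality is implicit in the parallelism conditions that cut out the characteristic cone. The moderately slanted condition reads $0 < u_i \leq b_i$ in these weight coordinates. On the characteristic cone the area therefore collapses to
\[
\mathrm{Area}(q) \;=\; \sum_{i=1}^k a_i\, b_i.
\]

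Now suppose $\mathrm{Area}(q) \leq L$ for a weight vector in the characteristic cone. Since the $a_i$ are positive integers and all $b_i \geq 0$, this forces $b_i \leq L/a_i$ for each $i$. Each of the nonnegative summands $v_j^{(i)}$ and $w_j^{(i)}$ is then bounded by $b_i \leq L/a_i$, and the moderately slanted inequality gives $u_i \leq b_i \leq L/a_i$ as well. Since every remaining weight on $\tau$ is a linear function of $\{u_i, v_j^{(i)}, w_j^{(i)}\}$, it too is bounded, so the sub-level set $\{\mathrm{Area} \leq L\}$ intersected with the characteristic cone is bounded. I do not anticipate any real obstacle here; the statement is essentially a packaging of the area formula and the definition of the characteristic cone, and the only mild subtlety is checking that bounding the distinguished weights bounds the remaining switch-determined weights, which is immediate from linearity.
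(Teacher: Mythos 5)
Your proof is correct and follows the same approach the paper intends (the paper just asserts the proposition follows "directly from the discussion above," namely the displayed area formula and the choice of distinguished weights): you read off linearity from the formula, use the parallelism and moderately slanted constraints to bound $u_i, v_j^{(i)}, w_j^{(i)}$ by $L/a_i$ on the sub-level set, and then invoke linearity of the remaining switch-determined weights. You simply supply the details the paper leaves implicit.
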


\section{Counting square-tiled surfaces}

\subsection*{Outline of this section.} In this section we prove Theorem \ref{theo:main}, the main result of this paper. The proof relies on the sophisticated theory developed in work of Eskin, Mirzakhani, and Mohammadi \cite{EMM19} for counting mapping class group orbits of simple closed multi-curves in train track coordinates. We begin by reviewing this theory and discussing some variants. We then apply this theory together with the result discussed in \S 2 to prove Theorem \ref{theo:main}.

\subsection*{Mapping class group orbits.} In \cite{EMM19}, Eskin, Mirzakhani, and Mohammadi proved an effective estimate for the number of simple closed curves of length $\leq L$ on a compact surface equipped with a Riemannian metric of negative curvature. To prove this result, a sophisticated theory for counting mapping class group orbits of integral simple closed multi-curves in train track coordinates was developed. We now summarize the main aspects of this theory as well as discuss some variants.

Let $S$ be a connected, oriented surface of genus $g$ with $n$ punctures and $\tau$ be a train track on $S$. Denote by $U(\tau)$ the cone of non-negative weights on $\tau$ satisfying the switch conditions and by $\|\cdot\|$ the $L^1$ norm on $U(\tau)$. Consider the set
\[
P(\tau) := \{\lambda \in U(\tau) \ | \ \|\lambda\| = 1\}.
\]
By a polyhedron $\mathcal{U} \subseteq P(\tau)$ we mean a polyhedron of dimension $\mathrm{dim} \thinspace U(\tau) -1$ where the number of facets and the angles are bounded below by uniform constants depending only on $S$; facets are allowed to be open and/or closed. For the rest of this discussion let $\gamma := a_1\gamma_1 + \cdots + a_k \gamma_k$ be an integral simple closed multi-curve on $S$. For every $L \geq 0$ consider the counting function
\[
s(\gamma,\mathcal{U},L) := \#\left\lbrace
\begin{array}{c | l}
\alpha \in \mathrm{Mod}(S) \cdot \gamma  & \alpha \in \mathbf{R}^+ \cdot \mathcal{U} \\
 & \|\alpha\| \leq L
\end{array}\right\rbrace.
\]

Let $\mathcal{Q}$ the principal stratum of quadratic differentials on $S$. Denote $v(\gamma) > 0$ the Lebesgue measure of the set of quadratic differentials $q \in Q(\gamma)$ with $\mathrm{Area}(q) \leq 1$; this quantity is finite because of Propositions \ref{prop:train_tracks} and \ref{prop:area}. Denote by $\mu_\mathrm{Thu}$ the Thurston measure on $\mf(S)$. Recall that $\mf(S)$ can be endowed with a natural $\mathbf{R}^+$ action that scales transverse measures.

\begin{theorem} \cite[Theorem 7.1]{EMM19}
	There exists $\kappa = \kappa(S) > 0$  such that for every maximal train track $\tau$ on $S$, every polyhedron $\mathcal{U} \subseteq U(\tau)$, and every $L \geq 0$,
	\[
	s(\gamma,\mathcal{U},L) = v(\gamma) \cdot \mu_\mathrm{Thu}\left((0,1]\cdot \mathcal{U}\right) \cdot L^{6g-6} + O_{\gamma,\tau}\left(L^{6g-6-\kappa}\right).
	\]
\end{theorem}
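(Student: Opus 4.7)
The plan is to reduce the counting problem to an effective equidistribution statement for the horospherically foliated sets $\mathcal{Q}(\gamma)$ under the Teichm\"uller geodesic flow $g_t$, and then input exponential mixing of $g_t$ on the principal stratum (Avila--Gou\"ezel--Yoccoz) to get a power-saving rate. Under $g_t$, the vertical and horizontal foliations scale as $e^{-t}\Im$ and $e^{t}\Re$, so applying $g_{-\log L}$ identifies the set $\{q\in\mathcal{QT}(\gamma) : \Re(q)\in (0,L]\cdot\mathcal{U}\}$ with the set $\{q : \Im(q) = L\cdot\gamma,\ \Re(q)\in (0,1]\cdot\mathcal{U}\}$. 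After summing over $\mathrm{Mod}(S)\cdot\gamma$ and quotienting by $\mathrm{Stab}(\gamma)$ via Proposition \ref{prop:int_quot}, the count $s(\gamma,\mathcal{U},L)$ becomes the number of times a single compact ``transversal'' to the horosphere direction meets a fundamental domain for $\mathcal{Q}$ after flowing forward for time $\log L$.

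The key analytic ingredient I would prove is an effective horosphere equidistribution statement of the form
\[
\int_{\mathcal{Q}(\gamma)^{\leq 1}} \phi(g_t\, q)\, dq \; = \; v(\gamma)\cdot\int_{\mathcal{Q}} \phi\, d\mu_{MV} \; + \; O_{\phi,\gamma}\!\left(e^{-\kappa_0 t}\right)
\]
for $\phi\in C_c^\infty(\mathcal{Q})$, where $dq$ is Lebesgue in the train-track coordinates supplied by Proposition \ref{prop:train_tracks}. This is derived from exponential mixing of $g_t$ via Margulis' thickening trick: thicken $\mathcal{Q}(\gamma)^{\leq 1}$ by a small $\epsilon$-ball transverse to the horosphere direction, apply effective mixing to a smoothed indicator of the thickening paired against $\phi$, and divide by the transverse volume, losing a factor polynomial in $\epsilon^{-1}$. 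To convert this into the count $s(\gamma,\mathcal{U},L)$, I would then smooth the indicator of $(0,1]\cdot\mathcal{U}$ at scale $\epsilon$ in train-track coordinates on $U(\tau)$, plug the smoothed indicator into the equidistribution statement, and estimate the boundary loss by $O(\epsilon\cdot L^{6g-6})$ using that $\mathcal{U}$ is a polyhedron with a uniformly bounded number of facets and angles. Optimizing $\epsilon$ as a power of $L$ yields the desired exponent $\kappa = \kappa(S)>0$.

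The main obstacle is the non-compactness of $\mathcal{Q}(\gamma)^{\leq 1}$ and of the support of $\phi$ near the cusps of $\mathcal{Q}$: excursions of $g_t$-orbits into the thin part of the stratum, where short saddle connections appear, would otherwise contaminate the error term. Controlling these excursions requires a quantitative non-divergence estimate of Athreya--Eskin--Masur type for the Teichm\"uller flow, giving polynomial decay for the measure of orbits spending too much time deep in the thin part; this must be balanced carefully against the smoothing scale and the mixing rate. A secondary difficulty is verifying that the main term comes out to exactly $v(\gamma)\cdot\mu_{\mathrm{Thu}}((0,1]\cdot\mathcal{U})$: the factor $v(\gamma)$ arises from the normalization of the horosphere measure on $\mathcal{Q}(\gamma)^{\leq 1}$, while $\mu_{\mathrm{Thu}}((0,1]\cdot\mathcal{U})$ arises from Lebesgue measure on the cone $(0,1]\cdot\mathcal{U}\subseteq U(\tau)$, which coincides with the Thurston measure in train-track coordinates.
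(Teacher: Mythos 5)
The statement in question is a direct citation of \cite[Theorem 7.1]{EMM19} and the paper does not prove it: there is no proof of this theorem in the paper to compare your proposal against. The closest the paper comes is the one-paragraph proof of Theorem~\ref{theo:EMM} (the variant for general connected components), which simply summarizes the architecture of the \cite{EMM19} argument as: exponential mixing of the Teichm\"uller flow $\Rightarrow$ effective horosphere equidistribution (\cite[Prop.~3.2]{EMM19}) $\Rightarrow$ bisector counting estimate (\cite[Prop.~4.1]{EMM19}) $\Rightarrow$ the counting theorem, and then verifies that each ingredient is available outside the principal stratum.

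Measured against that architecture, your sketch gets the endpoints right (exponential mixing as the driving force, a Margulis-thickened effective horosphere equidistribution statement, non-divergence \`a la Athreya--Eskin--Masur to control excursions into the thin part, smoothing the indicator of the polyhedron and optimizing the scale against the mixing rate) but omits the middle layer that the paper itself flags as ``the main technical tool'': the bisector counting estimate. Your passage from the effective equidistribution of $g_t\cdot\mathcal{Q}(\gamma)^{\leq 1}$ directly to the discrete count $s(\gamma,\mathcal{U},L)$ is compressed to one sentence (``the number of times a single compact transversal meets a fundamental domain after flowing for time $\log L$''), and this is exactly where the substance lives. The count $s(\gamma,\mathcal{U},L)$ is a sum over the discrete orbit $\mathrm{Mod}(S)\cdot\gamma$ in $\mathcal{MF}$ subject to \emph{two} constraints (a norm cutoff and a cone condition), and converting this to an integral over horospheres requires unfolding the orbit sum, pairing each $\alpha$ with $\gamma$ via Hubbard--Masur (which, as you note only in passing, requires $\alpha$ and $\gamma$ to fill), and then controlling the resulting region with a bisector-type argument rather than a pure equidistribution statement. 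A secondary imprecision: your proposed equidistribution integral over $\mathcal{Q}(\gamma)^{\leq 1}$ is a measure of a set, and you never close the loop between that measure and the discrete orbit count, which is precisely what \cite[Prop.~4.1]{EMM19} is engineered to do.

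In short, your proposal is a reasonable high-level reconstruction of the \cite{EMM19} strategy, and the ingredients you name do appear in that proof, but a complete argument along your lines would need to rediscover the bisector counting estimate as the bridge between the effective horosphere equidistribution and the lattice-point count in train-track coordinates. Since the paper treats the theorem as a black box, any self-contained proof is necessarily going beyond what the paper supplies.
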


In the ensuing discussion we use the notation introduced in \S 2. Let $\mathcal{Q}$ be a connected component of a stratum of quadratic differentials. Recall that we denote by $h > 0$ its complex dimension. Denote by $v(\gamma,\mathcal{Q}) > 0$ the Lebesgue measure of the set of quadratic differentials $q \in Q(\gamma)$ with $\mathrm{Area}(q) \leq 1$; this quantity is finite because of Propositions \ref{prop:train_tracks} and \ref{prop:area}. Given a moderately slanted cylinder train track $\tau$ denote by $\mu$ the Lebesgue measure on its weight space.

\begin{theorem}
	\label{theo:EMM}
	There exists $\kappa = \kappa(\mathcal{Q}) > 0$  such that for every moderately slanted cylinder train track $\tau$ on $S$ carrying vertical foliations of quadratic differentials in  $\mathcal{Q}(\gamma)$, every polyhedron $\mathcal{U} \subseteq U(\tau)$, and every $L \geq 0$,
	\[
	s(\gamma,\mathcal{U},L) = v(\gamma,\mathcal{Q}) \cdot \mu\left((0,1]\cdot \mathcal{U}\right) \cdot L^h + O_{\gamma,\tau}\left(L^{h-\kappa}\right).
	\]
\end{theorem}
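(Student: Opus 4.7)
The plan is to follow the template of the proof of \cite[Theorem 7.1]{EMM19}, adapted to a non-principal connected component $\mathcal{Q}$ and to the restricted class of train tracks introduced in Section 2. The key point is that the moderately slanted cylinder train tracks have been engineered---by Propositions \ref{prop:train_tracks}, \ref{prop:corresp1}, and Corollary \ref{cor:corresp2}---so that the characteristic cones parametrize $\mathcal{Q}(\gamma)$ and its natural horospheres in exactly the form that the EMM argument requires, while Proposition \ref{prop:area} guarantees that the $L^1$ norm on the weight space agrees, up to a bounded factor, with the intersection pairing $i(\cdot,\gamma)$ and hence with twice the area of the associated quadratic differential in $\mathcal{Q}(\gamma)$.

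First, I would reinterpret $s(\gamma,\mathcal{U},L)$ as a lattice-point count. Fixing $\tau$ and restricting to its characteristic cone, the integral weight vectors in $\mathbf{R}^+\cdot\mathcal{U}$ of $L^1$ norm at most $L$ are exactly the integer points in $L\cdot((0,1]\cdot\mathcal{U})$ satisfying the switch conditions, and each corresponds, via Proposition \ref{prop:corresp1}, to a unique element of $\mathrm{Mod}(S)\cdot\gamma$ modulo $\mathrm{Stab}(\gamma)$. Summing over the finitely many moderately slanted cylinder train tracks produced by Proposition \ref{prop:train_tracks} and using that the resulting chart cover has bounded multiplicity, the count $s(\gamma,\mathcal{U},L)$ is reduced to a weighted lattice-point count in a dilated polyhedral piece of the characteristic cone.

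Second, I would thicken the polyhedron $\mathcal{U}$ on the projective slice $P(\tau)$ by a small parameter $\epsilon>0$ to obtain inner and outer polyhedral approximations $\mathcal{U}^-_\epsilon \subseteq \mathcal{U} \subseteq \mathcal{U}^+_\epsilon$ whose Lebesgue measures differ by $O(\epsilon)$, and whose lattice-point counts bracket the desired one. The corresponding horospherical piece in $\mathcal{Q}(\gamma)\cap\{\mathrm{Area}\leq 1\}$ is then pushed forward by the Teichmüller geodesic flow $g_t$ for $t = \tfrac{1}{2}\log L$; since $g_t$ scales $\Re(q)$ by $e^{-t}$ and $\Im(q)$ by $e^t$, the lattice-point count of norm at most $L$ transfers under this rescaling to an equidistribution statement on $\mathcal{Q}$. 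Invoking effective equidistribution of horocyclic pieces---a consequence of exponential mixing of $g_t$ on $\mathcal{Q}$, available for every connected component by the work of Avila--Gouëzel and Avila--Resende---yields the main term $v(\gamma,\mathcal{Q})\cdot\mu((0,1]\cdot\mathcal{U})\cdot L^h$, with an error governed by the mixing rate. Optimizing $\epsilon$ against that rate gives the power saving $L^{h-\kappa}$ with $\kappa = \kappa(\mathcal{Q})>0$.

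The principal technical obstacle is promoting the exponential mixing of $g_t$ to an effective horocyclic equidistribution on the non-principal stratum $\mathcal{Q}$. This requires, beyond the mixing input, a quantitative non-divergence estimate for horocyclic pieces near the cusps of $\mathcal{Q}$, generalizing the Eskin--Masur-type estimates used in the principal-stratum argument of \cite{EMM19}; such estimates are needed to ensure that the pushed-forward horospherical pieces do not escape to infinity at a rate that overwhelms the mixing gain. A secondary point is absorbing the combinatorial complexity of the characteristic cone and the switch conditions into the implicit constant, so that $\kappa$ depends only on $\mathcal{Q}$; the uniform polyhedron hypothesis on $\mathcal{U}$ is precisely what makes this step go through with an exponent independent of $\tau$ and $\mathcal{U}$.
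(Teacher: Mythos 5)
Your proposal takes essentially the same route as the paper: both reduce the statement to re-running the argument of \cite[Theorem 7.1]{EMM19} on an arbitrary connected component $\mathcal{Q}$, with the crucial input being that exponential mixing of the Teichm\"uller geodesic flow holds on every connected component of every stratum by Avila--Gou\"ezel--Resende--Yoccoz, so that the effective horospherical equidistribution (and, in the paper's phrasing, the bisector counting estimate of \cite[Proposition 4.1]{EMM19} that it underlies) carries over unchanged. The paper's actual proof is simply a citation-level argument stating that the technical ingredients of \cite{EMM19} transfer verbatim; your write-up unpacks what is inside that citation (lattice-point reformulation, geodesic-flow rescaling, thickening the polyhedron, optimizing $\epsilon$ against the mixing rate) and, usefully, flags quantitative non-divergence of horospherical pieces on the non-principal stratum as the point that needs explicit verification---something the paper subsumes under the remark that ``the rest of the technical results\ldots also holds in the proposed setting.'' One small imprecision: the sentence asserting that each integer weight vector in the cone corresponds via Proposition \ref{prop:corresp1} to an element of $\mathrm{Mod}(S)\cdot\gamma$ conflates the Hubbard--Masur parametrization of $\mathcal{Q}(\gamma)$ by measured foliations with the subset of carried integral foliations that lie in the prescribed orbit; the counting function $s(\gamma,\mathcal{U},L)$ is over that (proper) subset, which is exactly why the EMM machinery rather than a straight lattice-point count is needed. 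This does not affect the overall strategy, which is sound.
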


\begin{proof}
	The result follows by the same arguments used to prove \cite[Theorem 7.1]{EMM19}. More concretely, the main technical tool used in the proof of this result is the bisector counting estimate \cite[Proposition 4.1]{EMM19}. This estimate holds for any connected components of a stratum. Indeed, the backbone of the proof of this estimate is the horosphere equidistribution result \cite[Proposition 3.2]{EMM19}, which holds for every connected component of a stratum. The main driving force behind the proof of this equidistribution result is the exponential mixing rate of the Teichmüller geodesic flow. This flow is known to be mixing on any connected component of a stratum due to work of Avila, Gouëzel, Resende, and Yoccoz \cite{AGY06,AR12,AG13}. The rest of the technical results used in the proof of \cite[Theorem 7.1]{EMM19} also holds in the proposed setting.
\end{proof}

\subsection*{Square-tiled surfaces.} For the rest of this section fix $\mathcal{Q}$ a connected component of a stratum of quadratic differentials on a surface $S$ and integral simple closed multi-curves $\gamma_1$ and $\gamma_2$ on $S$. For every $L \geq 0$ consider the counting function
\[
sq(\gamma_1,\gamma_2,\mathcal{Q},L) := \#\left(\left\lbrace \begin{array}{c|l}
\text{square-tiled surfaces $q $} & q \in \mathcal{Q},\\
& \boldsymbol{\Re}(q) = \mcg \cdot \gamma_1, \\
& \boldsymbol{\Im}(q) = \mcg \cdot \gamma_2, \\
& \mathrm{Area}(q) \leq L. 
\end{array} \right\rbrace \bigg/ \sim\right),
\]
where $\sim$ denotes the equivalence relation induced by cut-and-paste operations.

Our goal for the rest of this section is to prove an effective estimate for this counting function. To do so we first recast it as a counting function of mapping class group orbits of integral simple closed multi-curves in train track coordinates. We then apply Theorem \ref{theo:EMM} to get the desired effective estimate.

\subsection*{Recasting.} By work of Hubbard and Masur \cite{HM79}, square-tiled surfaces are in one-to-one correspondence with filling pairs of integral simple closed multi-curves. Given a filling pair of integral simple closed multi-curves $\alpha$ and $\beta$ on $S$, denote by $q(\alpha,\beta)$ the corresponding square-tiled surface.  Notice that
\begin{equation*}
\label{eq:int}
\mathrm{Area}(q(\alpha,\beta)) = i(\alpha,\beta).
\end{equation*}
Using this correspondence we can recast the counting function $sq(\gamma_1,\gamma_2,\mathcal{Q},L)$ in a more convenient way; compare to \cite[\S 3]{Ara20d}.

\begin{proposition}
	\label{prop:recast}
In the context above, for every $L \geq 0$,
\[
sq(\gamma_1,\gamma_2,\mathcal{Q},L) =\#\left(\left\lbrace \begin{array}{c | l}
\alpha \in \mcg \cdot \gamma_1 & \alpha \in \mf(\gamma_2),\\
& q(\alpha,\gamma_2) \in \mathcal{Q}, \\
& i(\alpha,\gamma_2) \leq L.
\end{array} \right\rbrace \bigg/ \mathrm{Stab}(\gamma_2)\right).
\]
\end{proposition}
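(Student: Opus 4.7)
The proof is essentially an unwinding of definitions combined with Hubbard--Masur and an orbit-stabilizer argument; I would proceed in four short steps.

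First, I would lift the problem from the moduli space to Teichmüller space. Cut-and-paste equivalence of square-tiled surfaces in $\mathcal{Q}$ agrees with the $\mcg$-action on the lift $\mathcal{Q}\mathcal{T}$, so the set on the left-hand side is in bijection with
\[
\left\{\tilde q \in \mathcal{Q}\mathcal{T}_{\mathcal{Q}} \ \middle|\ \tilde q \text{ square-tiled},\ \Re(\tilde q)\in \mcg\cdot\gamma_1,\ \Im(\tilde q)\in\mcg\cdot\gamma_2,\ \mathrm{Area}(\tilde q)\leq L\right\}\big/\mcg,
\]
where $\mathcal{Q}\mathcal{T}_{\mathcal{Q}}$ is the connected component lifting $\mathcal{Q}$.

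Second, I would invoke Hubbard--Masur in the form already recalled in the paper: $(\Re,\Im)$ is a $\mcg$-equivariant homeomorphism onto $\mf\times\mf\setminus\Delta$, and $\tilde q$ is square-tiled precisely when both $\Re(\tilde q)$ and $\Im(\tilde q)$ are integral simple closed multi-curves. Writing $q(\alpha,\beta)$ for the preimage, this identifies the above set with
\[
\left\{(\alpha,\beta)\in \mcg\cdot\gamma_1\times\mcg\cdot\gamma_2 \ \middle|\ (\alpha,\beta)\notin\Delta,\ q(\alpha,\beta)\in \mathcal{Q}\mathcal{T}_{\mathcal{Q}},\ i(\alpha,\beta)\leq L\right\}\big/\mcg,
\]
using the identity $\mathrm{Area}(q(\alpha,\beta))=i(\alpha,\beta)$ stated immediately before the proposition.

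Third, I would run an orbit-stabilizer normalization on the second coordinate. Since $\mcg$ acts transitively on $\mcg\cdot\gamma_2$, every $\mcg$-orbit of pairs contains a representative of the form $(\alpha,\gamma_2)$, and two such representatives $(\alpha,\gamma_2)$ and $(\alpha',\gamma_2)$ lie in the same orbit if and only if $\alpha'=\varphi\cdot\alpha$ for some $\varphi\in\mathrm{Stab}(\gamma_2)$. This reduces the quotient to
\[
\left\{\alpha\in\mcg\cdot\gamma_1 \ \middle|\ (\alpha,\gamma_2)\notin\Delta,\ q(\alpha,\gamma_2)\in\mathcal{Q}\mathcal{T}_{\mathcal{Q}},\ i(\alpha,\gamma_2)\leq L\right\}\big/\mathrm{Stab}(\gamma_2).
\]

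Finally, I would translate the remaining two conditions into the form stated in the proposition: $(\alpha,\gamma_2)\notin\Delta$ is exactly the condition $\alpha\in\mf(\gamma_2)$ by definition, and $q(\alpha,\gamma_2)\in\mathcal{Q}\mathcal{T}_{\mathcal{Q}}$ projects to the condition $q(\alpha,\gamma_2)\in\mathcal{Q}$ under the orbifold quotient. There is no substantial obstacle here; the only point requiring a line of care is verifying that cut-and-paste equivalence in $\mathcal{Q}$ genuinely coincides with the $\mcg$-action on $\mathcal{Q}\mathcal{T}$ (this is where Proposition \ref{prop:corresp_0} and the underlying Hubbard--Masur framework do the real work), after which the rest is a clean orbit-stabilizer calculation.
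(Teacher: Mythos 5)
Your proposal is correct and follows essentially the same route as the paper: use the Hubbard--Masur correspondence to rewrite the count as a count of pairs $(\alpha,\beta)\in\mcg\cdot\gamma_1\times\mcg\cdot\gamma_2$ with $(\alpha,\beta)\notin\Delta$ modulo the diagonal $\mcg$-action, then normalize the second coordinate via orbit-stabilizer to replace the diagonal quotient by a quotient by $\mathrm{Stab}(\gamma_2)$. Your version is somewhat more explicit about the lift to Teichmüller space and the cut-and-paste / $\mcg$-action identification, but these are the same ideas the paper invokes implicitly, so the two proofs are substantively identical.
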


\begin{proof}
	Using the correspondence between square-tiled surfaces and filling pairs of integral simple closed multi-curves we write
	\[
	sq(\gamma_1,\gamma_2,\mathcal{Q},L) =\#\left(\left\lbrace \begin{array}{c | l}
	(\alpha,\beta) \in \mcg \cdot \gamma_1 \times \mcg \cdot \gamma_2  & (\alpha,\beta) \notin \Delta,\\
	& q(\alpha,\beta) \in \mathcal{Q}, \\
	& i(\alpha,\beta) \leq L.
	\end{array} \right\rbrace \bigg/ \mcg\right),
	\]
	where the quotient by $\mcg$ corresponds to the diagonal action on $\mf \times \mf$. In turn, this expression can be rewritten as
	\[
	sq(\gamma_1,\gamma_2,\mathcal{Q},L) =\#\left(\left\lbrace \begin{array}{c | l}
	\alpha \in \mcg \cdot \gamma_1 & \alpha \in \mf(\gamma_2),\\
	& q(\alpha,\gamma_2) \in \mathcal{Q}, \\
	& i(\alpha,\gamma_2) \leq L.
	\end{array} \right\rbrace \bigg/ \mathrm{Stab}(\gamma_2)\right). \qedhere
	\]
\end{proof}

\subsection*{Geometric intersection numbers.} Fix $\tau$ a moderately slanted cylinder train track carrying singular measured foliations in $\mf(\gamma_2)$ corresponding to quadratic differentials in $\mathcal{Q}(\gamma_2)$. Proposition \ref{prop:area} guarantees the function $i(\cdot,\gamma_2)$ is a linear function over the characteristic cone of $\tau$. In particular, this function is Lipschitz. Given a non-empty polyhedron $\mathcal{U} \subseteq P(\tau)$ in the characteristic cone of $\tau$ denote
\begin{gather*}
M(\gamma_2,\mathcal{U}) := \max_{\lambda \in \mathcal{U}} i(\lambda,\gamma_2), \\ m(\gamma_2,\mathcal{U}) := \min_{\lambda \in \mathcal{U}} i(\lambda,\gamma_2).
\end{gather*}
Proposition \ref{prop:area} ensures these quantities are positive and finite. Denote the $L^1$ diameter of the polyhedron $\mathcal{U}$ by $\mathrm{diam}(\mathcal{U})$. As a direct consequence of the discussion above we deduce the following.

\begin{proposition}
	\label{prop:lip}
	For every polyhedron $\mathcal{U}  \subseteq P(\tau)$ in the characteristic cone of $\tau$,
	\[
	|M(\gamma_2,\mathcal{U}) - m(\gamma_2,\mathcal{U})| \preceq_{\gamma_2,\tau} \mathrm{diam}(\mathcal{U}).
	\]
\end{proposition}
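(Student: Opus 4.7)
The plan is to deduce the estimate directly from Proposition \ref{prop:area}, which tells us that on the characteristic cone of $\tau$ the function $i(\cdot,\gamma_2)$ agrees with the restriction of a single linear functional on the (finite-dimensional) weight space of $\tau$. Once we have linearity, everything else is a standard finite-dimensional Lipschitz argument, and the only point to take some care with is that the claimed implicit constant is allowed to depend on $\gamma_2$ and $\tau$.

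Concretely, first I would write $i(\lambda,\gamma_2) = \sum_e c_e \, \lambda(e)$, where the sum ranges over the (finitely many) edges $e$ of $\tau$ and the coefficients $c_e = c_e(\gamma_2,\tau)$ are the ones provided by Proposition \ref{prop:area}. Setting $C(\gamma_2,\tau) := \max_e |c_e|$, this immediately gives
\[
|i(\lambda,\gamma_2) - i(\lambda',\gamma_2)| \leq C(\gamma_2,\tau) \cdot \|\lambda - \lambda'\|
\]
for all $\lambda,\lambda'$ in the weight space of $\tau$, where $\|\cdot\|$ is the $L^1$ norm.

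Next, I would apply this Lipschitz bound to the polyhedron $\mathcal{U}$. Since $\mathcal{U} \subseteq P(\tau)$ is bounded and $i(\cdot,\gamma_2)$ is continuous, the values $M(\gamma_2,\mathcal{U})$ and $m(\gamma_2,\mathcal{U})$ are achieved (possibly after passing to the closure of $\mathcal{U}$, which does not change the sup/inf); let $\lambda_{\max}, \lambda_{\min} \in \overline{\mathcal{U}}$ be points realizing them. Then $\|\lambda_{\max} - \lambda_{\min}\| \leq \mathrm{diam}(\mathcal{U})$, so
\[
|M(\gamma_2,\mathcal{U}) - m(\gamma_2,\mathcal{U})| = |i(\lambda_{\max},\gamma_2) - i(\lambda_{\min},\gamma_2)| \leq C(\gamma_2,\tau) \cdot \mathrm{diam}(\mathcal{U}),
\]
which is exactly the asserted inequality with an implicit constant depending on $\gamma_2$ and $\tau$.

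Honestly, there is no real obstacle here: the content has already been absorbed into Proposition \ref{prop:area}, and Proposition \ref{prop:lip} is just its Lipschitz reformulation. The only place where one should be mildly careful is the extraction of $M$ and $m$ when $\mathcal{U}$ has open facets, but this is handled by passing to $\overline{\mathcal{U}}$ as above, or equivalently by approximating by sequences in $\mathcal{U}$ and using continuity of $i(\cdot,\gamma_2)$.
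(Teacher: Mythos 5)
Your proof is correct and matches the paper's approach: the paper also derives this statement directly from the linearity of $i(\cdot,\gamma_2)$ established in Proposition \ref{prop:area}, treating the resulting Lipschitz bound on $P(\tau)$ as immediate. You have simply written out explicitly the elementary Lipschitz estimate (and the minor care about open facets) that the paper leaves as ``a direct consequence of the discussion above.''
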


\subsection*{Comparison.} For every non-empty polyhedron $\mathcal{U} \subseteq P(\tau)$ in the characteristic cone of $\tau$ and every $L \geq 0$ consider the counting function
\begin{gather*}
sq(\gamma_1,\gamma_2,\mathcal{U},L) :=\#\left\lbrace \begin{array}{c | l}
\alpha \in \mcg \cdot \gamma_1 & \alpha \in \mathbf{R}_+ \cdot \mathcal{U}\\
& i(\alpha,\gamma_2) \leq L
\end{array} \right\rbrace.
\end{gather*}

When $\mathcal{U} \subseteq P(\tau)$ is the intersection of the characteristic cone of $\tau$ with $P(\tau)$ we denote this counting function simply by $s(\gamma_1,\gamma_2,\tau,L)$. By Proposition \ref{prop:recast}, a first step towards the proof of Theorem \ref{theo:main}, the main result of this paper, would be to study the counting functions $sq(\gamma_1,\gamma_2,\mathcal{U},L)$. Theorem \ref{theo:EMM} allows us to study the counting functions $s(\gamma_1,\mathcal{U},L)$. The following bounds, which follow directly from the definitions, will thus play an important role.

\begin{proposition}
	For every non-empty polyhedrom $\mathcal{U} \subseteq P(\tau)$ in the characteristic cone of $\tau$ and every $L \geq 0$, the following bounds hold,
	\begin{gather*}
	sq(\gamma_1,\gamma_2,\mathcal{U},L) \leq s(\gamma_1,\mathcal{U},L/m(\gamma_2,\mathcal{U})), \\
	 s(\gamma_1,\mathcal{U},L/M(\gamma_2,\mathcal{U})) \leq sq(\gamma_1,\gamma_2,\mathcal{U},L).
	\end{gather*}
\end{proposition}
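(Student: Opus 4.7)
The plan is to compare the two homogeneous scaling functions at play: both $sq(\gamma_1,\gamma_2,\mathcal{U},L)$ and $s(\gamma_1,\mathcal{U},L)$ enumerate mapping classes $\alpha \in \mcg \cdot \gamma_1$ lying in the cone $\mathbf{R}_+ \cdot \mathcal{U}$, but the cutoff is measured by $i(\cdot,\gamma_2)$ in the first case and by the $L^1$-norm $\|\cdot\|$ in the second. So it suffices to sandwich one function by a scalar multiple of the other on $\mathbf{R}_+ \cdot \mathcal{U}$ and translate the resulting inclusion of sets into an inequality of cardinalities.

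The key observation, which I would state first, is that $\mathbf{R}_+ \cdot \mathcal{U}$ sits inside the characteristic cone of $\tau$, where by Proposition \ref{prop:area} the functional $i(\cdot,\gamma_2)$ is linear, hence positively homogeneous of degree one. Writing any $\alpha \in \mathbf{R}_+ \cdot \mathcal{U}$ uniquely as $\alpha = t\hat{\alpha}$ with $t = \|\alpha\|$ and $\hat{\alpha} \in \mathcal{U} \subseteq P(\tau)$, one has $i(\alpha,\gamma_2) = t \cdot i(\hat{\alpha},\gamma_2)$. The definitions of $m(\gamma_2,\mathcal{U})$ and $M(\gamma_2,\mathcal{U})$ as the min and max of $i(\cdot,\gamma_2)$ over $\mathcal{U}$ then yield the pointwise sandwich
\[
m(\gamma_2,\mathcal{U}) \cdot \|\alpha\| \;\leq\; i(\alpha,\gamma_2) \;\leq\; M(\gamma_2,\mathcal{U}) \cdot \|\alpha\|.
\]
Positivity and finiteness of $m(\gamma_2,\mathcal{U})$ and $M(\gamma_2,\mathcal{U})$, guaranteed by Proposition \ref{prop:area}, ensure that division by either constant is harmless.

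From the upper estimate, $\|\alpha\| \leq L/M(\gamma_2,\mathcal{U})$ forces $i(\alpha,\gamma_2) \leq L$, so every $\alpha$ counted by $s(\gamma_1,\mathcal{U},L/M(\gamma_2,\mathcal{U}))$ is also counted by $sq(\gamma_1,\gamma_2,\mathcal{U},L)$, producing the lower bound. From the lower estimate, $i(\alpha,\gamma_2) \leq L$ forces $\|\alpha\| \leq L/m(\gamma_2,\mathcal{U})$, producing the upper bound. There is no substantive obstacle: the proposition is a tautological consequence of the positive homogeneity of $i(\cdot,\gamma_2)$ on the characteristic cone together with the definitions of $m(\gamma_2,\mathcal{U})$ and $M(\gamma_2,\mathcal{U})$, so the author's remark that it ``follows directly from the definitions'' is accurate.
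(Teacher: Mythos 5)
Your argument is correct and fills in exactly the routine details the paper elides when it says the bounds "follow directly from the definitions": both counting functions enumerate the same set $\{\alpha \in \mcg\cdot\gamma_1 : \alpha \in \mathbf{R}_+\cdot\mathcal{U}\}$, and the pointwise sandwich $m(\gamma_2,\mathcal{U})\|\alpha\| \leq i(\alpha,\gamma_2) \leq M(\gamma_2,\mathcal{U})\|\alpha\|$ (from homogeneity and the definitions of $m$ and $M$) immediately converts one cutoff into the other. One small remark: the positive homogeneity of $i(\cdot,\gamma_2)$ under scaling is a general property of the intersection pairing on $\mathcal{MF}$ and does not require the linearity from Proposition \ref{prop:area}, though invoking that proposition is also harmless; what Proposition \ref{prop:area} genuinely buys you here is the positivity and finiteness of $m(\gamma_2,\mathcal{U})$ and $M(\gamma_2,\mathcal{U})$, which you correctly note is needed to divide by them.
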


Applying Theorem \ref{theo:EMM} we deduce the following corollary.

\begin{corollary}
	\label{cor:count}
	There exists a constant $\kappa = \kappa(\mathcal{Q}) > 0$ such that for every non-empty polyhedrom $\mathcal{U} \subseteq P(\tau)$ in the characteristic cone of $\tau$ and every $L \geq 0$, 
	\begin{gather*}
	sq(\gamma_1,\gamma_2,\mathcal{U},L) \leq v(\gamma_1,\mathcal{Q}) \cdot \mu\left((0,1]\cdot \mathcal{U}\right) \cdot (L/m(\gamma_2,\mathcal{U}))^h + O_{\gamma_1,\tau}\left(L^{h-\kappa}\right) , \\
	v(\gamma_1,\mathcal{Q}) \cdot \mu\left((0,1]\cdot \mathcal{U}\right) \cdot (L/M(\gamma_2,\mathcal{U}))^h + O_{\gamma_1,\tau}\left(L^{h-\kappa}\right)  \leq sq(\gamma_1,\gamma_2,\mathcal{U},L).
	\end{gather*}
\end{corollary}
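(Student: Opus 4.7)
My plan is to derive the corollary directly from the sandwich bounds stated just before it together with Theorem \ref{theo:EMM}. I would first substitute $L' = L/m(\gamma_2,\mathcal{U})$ into Theorem \ref{theo:EMM}, which gives
\[
s(\gamma_1,\mathcal{U},L/m(\gamma_2,\mathcal{U})) = v(\gamma_1,\mathcal{Q}) \cdot \mu((0,1]\cdot\mathcal{U}) \cdot \bigl(L/m(\gamma_2,\mathcal{U})\bigr)^h + O_{\gamma_1,\tau}\!\left(\bigl(L/m(\gamma_2,\mathcal{U})\bigr)^{h-\kappa}\right),
\]
and then chain this with the upper inequality $sq(\gamma_1,\gamma_2,\mathcal{U},L) \leq s(\gamma_1,\mathcal{U},L/m(\gamma_2,\mathcal{U}))$ of the preceding proposition to obtain the claimed upper estimate. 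The lower bound will be handled symmetrically, substituting $L' = L/M(\gamma_2,\mathcal{U})$ into Theorem \ref{theo:EMM} and chaining with the lower half of the sandwich.

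The only step that requires a moment of care is verifying that the substituted error term $O_{\gamma_1,\tau}\!\left(\bigl(L/m(\gamma_2,\mathcal{U})\bigr)^{h-\kappa}\right)$ can be absorbed into $O_{\gamma_1,\tau}(L^{h-\kappa})$ with an implicit constant independent of $\mathcal{U}$, and similarly for $M(\gamma_2,\mathcal{U})$. For this I would invoke Proposition \ref{prop:area}: the boundedness of the sub-level sets of $i(\cdot,\gamma_2)$ on the characteristic cone of $\tau$ is, by linearity and homogeneity, equivalent to the existence of a positive constant $c = c(\gamma_2,\tau) > 0$ such that $i(\lambda,\gamma_2) \geq c$ for every $\lambda$ in the intersection of $P(\tau)$ with the characteristic cone. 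In particular $m(\gamma_2,\mathcal{U}),\,M(\gamma_2,\mathcal{U}) \geq c$ uniformly in $\mathcal{U}$, so the factors $1/m^{h-\kappa}$ and $1/M^{h-\kappa}$ are bounded by constants depending only on $\gamma_2$ and $\tau$, and thus absorb into the implicit constant. Since $\tau$ is assumed to carry vertical foliations of quadratic differentials in $\mathcal{Q}(\gamma_2)$, the dependence on $\gamma_2$ is already encoded in the dependence on $\tau$, and the error stays of the desired form $O_{\gamma_1,\tau}(L^{h-\kappa})$.

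There is no serious obstacle. All of the difficult analytic input is housed in Theorem \ref{theo:EMM}, and the only geometric fact needed for the bookkeeping is precisely Proposition \ref{prop:area}; the proof amounts to a short, essentially mechanical substitution, whose only subtlety is the tracking of implicit constants to keep the error term uniform in the polyhedron $\mathcal{U}$.
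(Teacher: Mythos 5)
Your proposal is correct and matches the paper's (unstated) argument: substitute $L/m(\gamma_2,\mathcal{U})$ and $L/M(\gamma_2,\mathcal{U})$ into Theorem \ref{theo:EMM} and chain with the sandwich bounds of the preceding proposition. Your care about the uniformity of the error term in $\mathcal{U}$ is well-placed and correctly resolved: since the characteristic cone is scale-invariant and Proposition \ref{prop:area} gives boundedness of the sub-level sets of the linear functional $i(\cdot,\gamma_2)$ there, one gets a uniform positive lower bound $c(\gamma_2,\tau)$ on $i(\cdot,\gamma_2)$ over $P(\tau)$ intersected with the characteristic cone, hence on $m(\gamma_2,\mathcal{U})$ and $M(\gamma_2,\mathcal{U})$, which lets the factors $m^{-(h-\kappa)}$ and $M^{-(h-\kappa)}$ absorb into the implicit constant.
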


\subsection*{Leading terms.} Let $\mathcal{V} \subseteq P(\tau)$ be the intersection of the characteristic cone of $\tau$ with $P(\tau)$. Given a finite partition $\mathcal{U} := \{\mathcal{U}_i\}_{i=1}^N$ of $\mathcal{V}$, denote 
\[
\mathrm{diam}(\mathcal{U}) := \max_{i \in \{1,\dots,N\}} \mathrm{diam}(\mathcal{U}_i).
\]
Denote by $v(\gamma_2,\tau) > 0$ the positive constant
\[
v(\gamma_2,\tau) := \mu \left(\{\lambda \in \mathbf{R}^+ \cdot \mathcal{V} \ | \ i(\lambda,\gamma_2) \leq 1 \} \right).
\]

The following proposition will allow us to identify the leading term of the counting function $s(\gamma_1,\gamma_2,\tau,L)$ in the estimates that will be carried out later.

\begin{proposition}
	\label{prop:lead}
	For every partition $\mathcal{U} := \{\mathcal{U}_i\}_{i=1}^N$ of $\mathcal{V}$,
	\begin{gather*}
	\sum_{i=1}^N \frac{\mu\left((0,1]\cdot \mathcal{U}_i\right)}{M(\gamma_2,\mathcal{U}_i)^h} \leq v(\gamma_2,\tau) \leq \sum_{i=1}^N \frac{\mu\left((0,1]\cdot \mathcal{U}_i\right)}{m(\gamma_2,\mathcal{U}_i)^h}, \\
	\sum_{i=1}^N \frac{\mu\left((0,1]\cdot \mathcal{U}_i\right)}{m(\gamma_2,\mathcal{U}_i)^h} - \sum_{i=1}^N \frac{\mu\left((0,1]\cdot \mathcal{U}_i\right)}{M(\gamma_2,\mathcal{U}_i)^h} \preceq_{\gamma_2,\tau} \mathrm{diam}(\mathcal{U}).
	\end{gather*}
\end{proposition}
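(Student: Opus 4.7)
The plan is to disintegrate the Lebesgue measure $\mu$ along the radial direction of the characteristic cone and then exploit the linearity of $i(\cdot, \gamma_2)$ granted by Proposition \ref{prop:area}. Writing every $\lambda \in \mathbf{R}^+ \cdot \mathcal{V}$ uniquely as $\lambda = t \cdot u$ with $t > 0$ and $u \in \mathcal{V}$, the measure $\mu$ decomposes as $d\mu = t^{h-1}\, dt\, d\sigma(u)$ for a suitable measure $\sigma$ on $\mathcal{V}$. Since $i(\cdot, \gamma_2)$ is linear, $i(\lambda, \gamma_2) = t \cdot i(u, \gamma_2)$, and integrating the indicator of $\{i(\lambda, \gamma_2) \leq 1\}$ over each subcone $\mathbf{R}^+ \cdot \mathcal{U}_i$ yields
\[
v(\gamma_2, \tau) = \sum_{i=1}^N \int_{\mathcal{U}_i} \int_0^{1/i(u, \gamma_2)} t^{h-1}\, dt\, d\sigma(u) = \sum_{i=1}^N \int_{\mathcal{U}_i} \frac{d\sigma(u)}{h \cdot i(u, \gamma_2)^h}.
\]
The same polar computation with the ceiling $t = 1$ gives the identity $\mu((0,1]\cdot \mathcal{U}_i) = \sigma(\mathcal{U}_i)/h$.

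The first pair of inequalities is then immediate termwise: on each $\mathcal{U}_i$ the integrand $\frac{1}{h \cdot i(u, \gamma_2)^h}$ is pinched between $\frac{1}{h \cdot M(\gamma_2, \mathcal{U}_i)^h}$ and $\frac{1}{h \cdot m(\gamma_2, \mathcal{U}_i)^h}$, so integrating against $d\sigma$ and substituting $\sigma(\mathcal{U}_i) = h \cdot \mu((0,1]\cdot \mathcal{U}_i)$ gives
\[
\frac{\mu((0,1]\cdot \mathcal{U}_i)}{M(\gamma_2, \mathcal{U}_i)^h} \leq \int_{\mathcal{U}_i} \frac{d\sigma(u)}{h \cdot i(u, \gamma_2)^h} \leq \frac{\mu((0,1]\cdot \mathcal{U}_i)}{m(\gamma_2, \mathcal{U}_i)^h}.
\]
Summing over $i$ yields the first statement.

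For the second statement, I would combine the elementary bound
\[
\frac{1}{m(\gamma_2, \mathcal{U}_i)^h} - \frac{1}{M(\gamma_2, \mathcal{U}_i)^h} \leq \frac{h \cdot (M(\gamma_2, \mathcal{U}_i) - m(\gamma_2, \mathcal{U}_i))}{m(\gamma_2, \mathcal{U}_i)^{h+1}}
\]
with the Lipschitz estimate $M(\gamma_2, \mathcal{U}_i) - m(\gamma_2, \mathcal{U}_i) \preceq_{\gamma_2, \tau} \mathrm{diam}(\mathcal{U}_i) \leq \mathrm{diam}(\mathcal{U})$ provided by Proposition \ref{prop:lip}. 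Multiplying by $\mu((0,1]\cdot \mathcal{U}_i)$ and summing in $i$ leaves a factor of $\sum_i \mu((0,1]\cdot \mathcal{U}_i) = \mu((0,1]\cdot \mathcal{V})$, which is a finite constant depending only on $\tau$ since $(0,1]\cdot \mathcal{V}$ is bounded in the weight space. This produces the desired $\preceq_{\gamma_2, \tau} \mathrm{diam}(\mathcal{U})$ bound.

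The main obstacle is securing a uniform positive lower bound $m(\gamma_2, \mathcal{U}_i) \geq c(\gamma_2, \tau) > 0$ so that the factor $m(\gamma_2, \mathcal{U}_i)^{-(h+1)}$ can be absorbed into the $(\gamma_2, \tau)$-constant. This is furnished by Proposition \ref{prop:area}: if a sequence $u_n \in \mathcal{V}$ had $i(u_n, \gamma_2) \to 0$, then $\lambda_n := u_n/i(u_n, \gamma_2)$ would sit in the characteristic cone with $i(\lambda_n, \gamma_2) = 1$ yet $\|\lambda_n\| \to \infty$, contradicting the boundedness of sub-level sets of $i(\cdot, \gamma_2)$. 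This is the only step that genuinely invokes the structural content of Proposition \ref{prop:area} beyond linearity.
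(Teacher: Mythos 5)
Your proof is correct and takes essentially the same route as the paper: the polar decomposition of $\mu$ is precisely the "$h$-homogeneity under positive scalings" the paper invokes for the first pair of inequalities, and the second bound in both arguments reduces to Proposition \ref{prop:lip} after factoring out $\mu((0,1]\cdot\mathcal{V})$. You are more explicit than the paper in one useful place: you spell out why $m(\gamma_2,\mathcal{U}_i)$ is uniformly bounded below (by the boundedness of sub-level sets from Proposition \ref{prop:area}), which is silently absorbed into the $\preceq_{\gamma_2,\tau}$ constant in the paper's proof.
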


\begin{proof}
	The first set of bounds follows directly from the definitions and the fact that the measure $\mu$ is $h$-homogeneous under positive scalings. For the second bound notice that, by applying Proposition \ref{prop:lip},
	\begin{align*}
	\sum_{i=1}^N \frac{\mu\left((0,1]\cdot \mathcal{U}\right)}{m(\gamma_2,\mathcal{U})^h} &- \sum_{i=1}^N \frac{\mu\left((0,1]\cdot \mathcal{U}\right)}{M(\gamma_2,\mathcal{U})^h}\\
	 &\preceq_{\gamma_2,\tau} \mu((0,1] \cdot \mathcal{V}) \cdot \max_{i \in \{1,\dots,N\}}|M(\gamma_2,\mathcal{U}_i) - m(\gamma_2,\mathcal{U}_i)|\\
	&\preceq_{\gamma_2,\tau} \mathrm{diam}(\mathcal{U}).\qedhere
	\end{align*}
\end{proof}

\subsection*{Characteristic cones.} We are now ready to prove an effective estimate for the counting function $sq(\gamma_1,\gamma_2,\tau,L)$. This will be the main tool used in the proof of Theorem \ref{theo:main}, the main result of this paper.

\begin{proposition}
	\label{prop:count_pre}
	There exists a constant $\kappa = \kappa(\mathcal{Q}) > 0$ such that for every $L \geq 0$, 
	\[
	sq(\gamma_1,\gamma_2,\tau,L) = v(\gamma_1,\mathcal{Q}) \cdot v(\gamma_2,\tau) \cdot L^h + O_{\gamma_1,\gamma_2,\tau}\left(L^{h-\kappa}\right).
	\]
\end{proposition}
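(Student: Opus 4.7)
The plan is to slice the projected characteristic cone $\mathcal{V} \subseteq P(\tau)$ into many small polyhedra, apply the two-sided piecewise estimate from Corollary \ref{cor:count}, then use Proposition \ref{prop:lead} to recognize $v(\gamma_2,\tau)$ as the sum of leading-term contributions, and finally balance the two error sources by choosing the mesh size as a negative power of $L$.

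\textbf{Step 1 (partition).} Fix $\delta \in (0,1)$, to be specified later as a power $L^{-\eta}$. Since $\mathcal{V}$ is a bounded polyhedron of dimension $h-1$, subdivide it into a partition $\mathcal{U} = \{\mathcal{U}_i\}_{i=1}^N$ where each $\mathcal{U}_i$ is itself a polyhedron in the sense of \S3 (uniformly bounded number of facets, angles bounded below depending only on $S$), with $\mathrm{diam}(\mathcal{U}_i) \leq \delta$ and $N \preceq_\tau \delta^{-(h-1)}$. The uniform polyhedral quality is what lets us invoke Corollary \ref{cor:count} with the same constant $\kappa_0 := \kappa(\mathcal{Q})$ on every piece.

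\textbf{Step 2 (piecewise count).} Write
\[
sq(\gamma_1,\gamma_2,\tau,L) = \sum_{i=1}^N sq(\gamma_1,\gamma_2,\mathcal{U}_i,L).
\]
Applying the upper bound of Corollary \ref{cor:count} to each piece and summing gives
\[
sq(\gamma_1,\gamma_2,\tau,L) \leq v(\gamma_1,\mathcal{Q})\, L^h \sum_{i=1}^N \frac{\mu((0,1]\cdot \mathcal{U}_i)}{m(\gamma_2,\mathcal{U}_i)^h} + O_{\gamma_1,\tau}\!\left(N \cdot L^{h-\kappa_0}\right),
\]
where the constant in the error is uniform over pieces because it depends only on $\gamma_1$ and $\tau$. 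The analogous lower bound, using $M(\gamma_2,\mathcal{U}_i)$ in place of $m(\gamma_2,\mathcal{U}_i)$, holds with the same error structure.

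\textbf{Step 3 (leading term).} By Proposition \ref{prop:lead} both sums $\sum_i \mu((0,1]\cdot \mathcal{U}_i)/m(\gamma_2,\mathcal{U}_i)^h$ and $\sum_i \mu((0,1]\cdot \mathcal{U}_i)/M(\gamma_2,\mathcal{U}_i)^h$ equal $v(\gamma_2,\tau) + O_{\gamma_2,\tau}(\delta)$. Substituting into the bounds of Step 2 and using $N \preceq_\tau \delta^{-(h-1)}$ yields
\[
sq(\gamma_1,\gamma_2,\tau,L) = v(\gamma_1,\mathcal{Q})\, v(\gamma_2,\tau)\, L^h + O_{\gamma_1,\gamma_2,\tau}\!\left( L^h \delta + \delta^{-(h-1)} L^{h-\kappa_0} \right).
\]

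\textbf{Step 4 (optimization).} Balance the two errors by setting $L^h \delta = \delta^{-(h-1)} L^{h-\kappa_0}$, i.e.\ $\delta = L^{-\kappa_0/h}$. This produces a combined error $O(L^{h-\kappa_0/h})$, so the proposition holds with $\kappa := \kappa_0/h$.

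\textbf{Main obstacle.} The one genuinely delicate point is ensuring that the subdivision in Step 1 produces pieces that still qualify as polyhedra in the sense required by Corollary \ref{cor:count}, so that the same $\kappa_0$ and the same uniform implicit constants apply across all $N$ pieces (only the multiplicative factor $N$ absorbs the count). Since $\mathcal{V}$ is a fixed polyhedron depending only on $\tau$, this is achievable by a standard simplicial or cubical subdivision scheme, but it is what forces the error in Corollary \ref{cor:count} to be independent of $\mathcal{U}_i$ and is the reason the balancing step works cleanly.
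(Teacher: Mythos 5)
Your proposal is correct and follows essentially the same strategy as the paper's proof: partition $\mathcal{V}$ into small polyhedra of diameter $\leq\delta$, apply Corollary \ref{cor:count} on each piece, identify the leading term via Proposition \ref{prop:lead}, and balance $\delta$ against $L$ by choosing $\delta = L^{-\eta}$. The only (inconsequential) difference is that you bound the number of pieces by $N \preceq_\tau \delta^{-(h-1)}$ using the $(h-1)$-dimensionality of $\mathcal{V}$, whereas the paper uses the cruder bound $N \preceq_\tau \delta^{-h}$, so your optimization produces a marginally larger exponent $\kappa_0/h$ versus the paper's $\kappa_0/(h+1)$ — neither is claimed to be sharp, so this does not matter.
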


\begin{proof}
	Let $\delta \in (0,1)$ to be fixed later. Consider a partition $\mathcal{U} := \{\mathcal{U}_i\}_{i=1}^N$  of diameter $\mathrm{diam}(\mathcal{U}) \leq \delta$ of the characteristic cone of $\tau$ into $N \preceq_{\tau} \delta^{-h}$ polyhedrons. By Corollary \ref{cor:count}, for each of these polyhedrons we have the estimates
	\begin{gather*}
	sq(\gamma_1,\gamma_2,\mathcal{U}_i,L) \leq v(\gamma_1,\mathcal{Q}) \cdot \mu\left((0,1]\cdot \mathcal{U}_i\right) \cdot (L/m(\gamma_2,\mathcal{U}_i))^h + O_{\gamma_1,\tau}\left(L^{h-\kappa}\right) , \\
	v(\gamma_1,\mathcal{Q}) \cdot \mu\left((0,1]\cdot \mathcal{U}_i\right) \cdot (L/M(\gamma_2,\mathcal{U}_i))^h + O_{\gamma_1,\tau}\left(L^{h-\kappa}\right)  \leq sq(\gamma_1,\gamma_2,\mathcal{U}_i,L).
	\end{gather*}
	Adding up these estimates over $i \in \{1,\dots,N\}$ we get
	\begin{gather*}
	sq(\gamma_1,\gamma_2,\tau,L) \leq v(\gamma_1,\mathcal{Q}) \cdot \sum_{i=1}^N \frac{\mu\left((0,1]\cdot \mathcal{U}_i\right)}{m(\gamma_2,\mathcal{U}_i)^h} \cdot L^h + O_{\gamma_1,\tau}\left(N \cdot L^{h-\kappa}\right) , \\
	v(\gamma_1,\mathcal{Q}) \cdot \sum_{i=1}^N \frac{\mu\left((0,1]\cdot \mathcal{U}_i\right)}{M(\gamma_2,\mathcal{U}_i)^h} \cdot L^h + O_{\gamma_1,\tau}\left(N \cdot L^{h-\kappa}\right)  \leq sq(\gamma_1,\gamma_2,\mathcal{U}_i,L).
	\end{gather*}
	By Proposition \ref{prop:lead}, it follows that
	\[
	sq(\gamma_1,\gamma_2,\tau,L) = v(\gamma_1,\mathcal{Q}) \cdot v(\gamma_2,\tau) \cdot L^h+ O_{\gamma_1,\gamma_2,\tau}\left(\mathrm{diam}(\mathcal{U}) \cdot L^h + N \cdot L^{h-\kappa} \right).
	\]
	Notice that
	\[
	\mathrm{diam}(\mathcal{U}) \cdot L^h + N \cdot L^{h-\kappa} \preceq_{\tau} \delta \cdot L^h + \delta^{-h} \cdot L^{h-\kappa}.
	\]
	Let $\delta := L^{-\eta}$ with $\eta > 0$. Choose $\eta > 0$ so that 
	\[
	\kappa'=\kappa'(\mathcal{Q}) := \min\{\eta, \kappa - h\eta\} > 0.
	\]
	It follows that
	\[
	sq(\gamma_1,\gamma_2,\tau,L) = v(\gamma_1,\mathcal{Q}) \cdot v(\gamma_2,\tau) \cdot L^h+ O_{\gamma_1,\gamma_2,\tau}\left(L^{h-\kappa'}\right). \qedhere
	\]
\end{proof}

\subsection*{Proof of the main result.} We are now ready to prove Theorem \ref{theo:main}.

\begin{proof}[Proof of Theorem \ref{theo:main}]
	By Proposition \ref{prop:train_tracks} there exists a finite collection $\{\tau_i\}_{i=1}^N$ of moderately slanted cylinder train tracks which in their characteristic cone carry all singular measured foliation in the image of $\Re \colon \mathcal{QT}(\gamma_2) \to \mf(\gamma_2)$ up to the action of $\mathrm{Stab}(\gamma_2)$ without overlaps. By Proposition \ref{prop:recast}, we have
	\[
	sq(\gamma_1,\gamma_2,\mathcal{Q},L) = \sum_{i=1}^N sq(\gamma_1,\gamma_2,\tau_i,L).
	\]
	By Proposition \ref{prop:count_pre}, for every $i \in \{1,\dots,N\}$,
	\[
	sq(\gamma_1,\gamma_2,\tau_i,L) = v(\gamma_1,\mathcal{Q}) \cdot v(\gamma_2,\tau_i) \cdot L^h+ O_{\gamma_1,\gamma_2,\tau_i}\left(L^{h-\kappa}\right).
	\]
	Adding up these estimates we conclude
	\[
	sq(\gamma_1,\gamma_2,\mathcal{Q},L) = v(\gamma_1,\mathcal{Q}) \cdot v(\gamma_2,\mathcal{Q}) \cdot L^h+ O_{\gamma_1,\gamma_2,\mathcal{Q}}\left(L^{h-\kappa}\right). \qedhere
	\]
\end{proof}

\subsection*{Further remarks.} As explained in \S1, the ideas introduced in the proof of Theorem \ref{theo:main} can also be used to give effective estimates of other related counting functions of square-tiled surfaces. More explicitly, for every $L \geq 0$ consider the counting function
\[
sq(\gamma_1,*,\mathcal{Q},L) := \#\left(\left\lbrace \begin{array}{c | l}
\text{square-tiled surfaces $q$} & q \in \mathcal{Q}, \\
& \boldsymbol{\Re}(q) = \mcg \cdot \gamma_1, \\
& \mathrm{Area}(q) \leq L.
\end{array} \right\rbrace \bigg/ \sim \right),
\]
where $\sim$ denotes the equivalence relation induced by cut and paste operations. 

Using standard lattice point counting arguments in place of Theorem \ref{theo:EMM} in the proof of Theorem \ref{theo:main} yields the following result.

\begin{theorem}
	\label{theo:main_2}
	There exists a constant $\kappa = \kappa(\mathcal{Q}) > 0$ such that for every $L \geq 0$,
	\[
	sq(\gamma_1,*,\mathcal{Q},L) = v(\gamma_1,\mathcal{Q}) \cdot L^h+ O_{\gamma_1,\mathcal{Q}}\left(L^{h-\kappa}\right).
	\]
\end{theorem}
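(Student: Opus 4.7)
The plan is to mirror the proof of Theorem~\ref{theo:main} step by step, replacing the effective mapping class group orbit count of Theorem~\ref{theo:EMM} with a classical asymptotic for the number of integer lattice points in a dilated rational polytope. First, by the same Hubbard--Masur correspondence used in Proposition~\ref{prop:recast}, I would fix the vertical representative $\alpha = \gamma_1$ and reduce the diagonal $\mcg$-action to that of $\mathrm{Stab}(\gamma_1)$, obtaining
\[
sq(\gamma_1,*,\mathcal{Q},L) = \#\!\left(\left\lbrace \beta \in \mf(\gamma_1) \cap \mf_{\mathbb{Z}} : q(\gamma_1, \beta) \in \mathcal{Q}, \ i(\gamma_1, \beta) \leq L \right\rbrace \bigg/ \mathrm{Stab}(\gamma_1)\right),
\]
where $\mf_{\mathbb{Z}} \subseteq \mf$ denotes the set of integral measured foliations. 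The key difference from Proposition~\ref{prop:recast} is that $\beta$ is no longer constrained to a single mapping class group orbit.

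Next, I would invoke Proposition~\ref{prop:train_tracks} (applied with $\gamma_1$ in the role of the fixed foliation, using the symmetry between real and imaginary foliations in Hubbard--Masur) to produce finitely many moderately slanted cylinder train tracks $\tau_1,\ldots,\tau_N$ whose characteristic cones $\mathcal{V}_i$ carry, without overlaps, representatives of all $\mathrm{Stab}(\gamma_1)$-orbits of the relevant measured foliations. Integral measured foliations carried by $\tau_i$ correspond bijectively to integer points of the switch lattice $U(\tau_i)_{\mathbb{Z}}$. Setting $R_i(L) := \{\lambda \in \mathcal{V}_i : i(\lambda,\gamma_1) \leq L\}$, Proposition~\ref{prop:area} guarantees that each $R_i(L)$ is a bounded rational polytope of real dimension $h$ equal to the dilate $L \cdot R_i(1)$, so the count splits as
\[
sq(\gamma_1,*,\mathcal{Q},L) = \sum_{i=1}^N \#\bigl(R_i(L) \cap U(\tau_i)_{\mathbb{Z}}\bigr).
\]

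A classical lattice point count for dilated rational polytopes (for instance via Davenport's lemma) then gives
\[
\#\bigl(R_i(L) \cap U(\tau_i)_{\mathbb{Z}}\bigr) = \mathrm{vol}(R_i(1)) \cdot L^h + O_{\tau_i}\!\left(L^{h-1}\right),
\]
and summing over $i$ identifies the leading constant as $v(\gamma_1,\mathcal{Q})$, yielding the theorem with any $0 < \kappa \leq 1$. Because the lattice point count is invoked directly rather than through a mapping class group orbit count, the partition-and-balance step from the proof of Proposition~\ref{prop:count_pre} (used there to trade $\mathrm{diam}(\mathcal{U})$ against the error in Theorem~\ref{theo:EMM}) is unnecessary here.

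The main technical point to verify is that the train track parametrization of Proposition~\ref{prop:train_tracks} pulls integral simple closed multi-curves $\beta$ back to integer points of $U(\tau_i)_{\mathbb{Z}}$ and conversely, so that the lattice-point count faithfully reproduces the combinatorial count. A secondary issue is the semi-open nature of the characteristic cone coming from the constraint $0 < s_j \leq b_j$, but lattice effects on the open boundary facets contribute only to the $O(L^{h-1})$ error and do not disturb the leading asymptotic.
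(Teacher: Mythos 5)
Your proposal is correct and unpacks the paper's one-sentence proof sketch in exactly the intended way: recast via Hubbard--Masur with $\gamma_1$ now as the fixed side, parametrize the free side by the moderately slanted cylinder train tracks of Proposition~\ref{prop:train_tracks}, and replace the orbit count of Theorem~\ref{theo:EMM} by a direct lattice-point count in the characteristic cones. Your observation that the partition-and-balance step of Proposition~\ref{prop:count_pre} becomes superfluous is accurate and worth noting: in the proof of Theorem~\ref{theo:main} that step is needed only to reconcile the $\|\cdot\|$-ball of Theorem~\ref{theo:EMM} with the sublevel set $\{i(\cdot,\gamma_2)\le L\}$, whereas Davenport's lemma applies directly to the bounded rational polytope $R_i(L)$ cut out by $i(\cdot,\gamma_1)\le L$ via Proposition~\ref{prop:area}, giving the cleaner exponent $\kappa=1$. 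The only point I would flag for care is the appeal to ``symmetry between real and imaginary foliations'': Propositions~\ref{prop:train_tracks} and~\ref{prop:area} are stated with $\gamma$ as the horizontal foliation, while in $sq(\gamma_1,*,\mathcal{Q},L)$ the constraint is $\boldsymbol{\Re}(q)=\mcg\cdot\gamma_1$; one should either invoke the $\mathrm{SO}(2)$-rotation (which preserves area, square-tiledness, and connected components of strata and swaps $\Re$ and $\Im$) or note that the whole of \S2 goes through verbatim with the roles of vertical and horizontal exchanged. This is routine but deserves a sentence, as does the standard fact that integer points of the switch lattice in the characteristic cone correspond precisely to integral simple closed multi-curves (equivalently, to square-tiled surfaces with the prescribed dual core multi-curve).
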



\bibliographystyle{amsalpha}


\bibliography{bibliography}

\end{document}